\documentclass[12pt]{amsart}
\usepackage[margin=1.2in]{geometry}

\usepackage{amsfonts}
\usepackage{amssymb}
\usepackage{amsmath}
\usepackage{amsthm}
\usepackage{mathtools}

\usepackage{tikz}
\usepackage{tikz-cd}
\usetikzlibrary{arrows,chains,matrix,positioning,scopes}

\usepackage{xcolor}

\makeatletter
\tikzset{join/.code=\tikzset{after node path={%
\ifx\tikzchainprevious\pgfutil@empty\else(\tikzchainprevious)%
edge[every join]#1(\tikzchaincurrent)\fi}}}
\makeatother
\tikzset{>=stealth',every on chain/.append style={join},
         every join/.style={->}}
\tikzstyle{labeled}=[execute at begin node=$\scriptstyle,
   execute at end node=$]

\theoremstyle{plain}
\newtheorem{theorem}{Theorem}[section]
\newtheorem{lemma}[theorem]{Lemma}
\newtheorem{proposition}[theorem]{Proposition}
\newtheorem{corollary}[theorem]{Corollary}

\theoremstyle{definition}

\newtheorem{remark}[theorem]{Remark}

\numberwithin{equation}{section}

\newcommand{\N}{\mathbb N}
\newcommand{\D}{\mathbb D}

\newcommand{\T}{\mathbb T}
\newcommand{\C}{\mathbb C}
\newcommand{\R}{\mathbb R}
\newcommand{\Z}{\mathbb Z}

\title{Some results about spaceability in function spaces}

\author[A. Debrouwere]{Andreas Debrouwere}
\address{A. Debrouwere, Department of Mathematics and Data Science \\ Vrije Universiteit Brussel, Belgium\\ Pleinlaan 2 \\ 1050 Brussels \\ Belgium}
\email{Andreas.Debrouwere@vub.be}

\subjclass[2020]{\emph{Primary:} 46E10, 15A03. \emph{Secondary:} 30H05, 26A16, 42A16.}
\keywords{Spaceability; H\"older regularity;  real analyticity; ultradifferentiable classes}


\begin{document}

\begin{abstract}
%In this article, we establish the spaceability of sets of nowhere regular  periodic functions, for various notions of regularity, including H\"older regularity and real analyticity. Our work solves several questions posed by  Bernal-Gonz\'alez et al. \cite{}.

We investigate two problems concerning spaceability in function spaces. First, we study the spaceability of sets of nowhere regular  periodic functions, for various notions of regularity, including H\"older regularity and real analyticity. Second, we show that the set of real analytic functions is spaceable in $C((0,1))$. In fact, we provide a complete characterization of the closed subspaces of $C((0,1))$ consisting of real analytic functions. Our work solves several open questions posed by  Bernal-Gonz\'alez et al. \cite{BG,BG0,BG1}.
\end{abstract}

\maketitle

\section{Introduction}

A recurrent phenomenon in analysis is that sets of functions exhibiting a certain irregular behaviour are large, both in the topological and algebraic sense. For example, the set of continuous nowhere differentiable functions on $[0,1]$ is comeager in $C([0,1])$ and contains an infinite-dimensional linear subspace, except for zero. The same holds for the set of smooth nowhere real analytic functions on $[0,1]$ within $C^\infty([0,1])$. We refer to \cite[Chapter 1]{linbook} for these and related results.

In this article, we are interested in the following notion of largeness: A subset $M$ of a topological vector space $E$ is said to be \emph{spaceable in $E$} if $M \cup \{0\}$ contains a closed infinite-dimensional subspace of $E$. This terminology was coined by Gurariy and Quarta \cite{GQ}. We study two problems  related to spaceability in function spaces.

\subsection{Spaceability of sets of nowhere regular periodic functions} A classical result of Fonf, Gurariy, and Kadets \cite{FGK} says that the set of continuous nowhere differentiable functions on $[0,1]$ is spaceable in $C([0,1])$; see \cite{Bayart,Berezhnoi,Hencl,RP} for improvements and related results. Here, we study the spaceability of various  sets of nowhere regular periodic functions. We consider H\"older regularity and ultradifferentiable classes, including real analyticity,  and work with spaces of periodic functions whose spectrum is contained in a prescribed set $\Gamma \subseteq \Z$, which includes spaces of disk algebra type ($\Gamma = \N$)  as an  important particular instance.

In the H\"older setting, we answer several questions posed by Bernal-Gonz\'alez et al.\ in \cite[p.\ 15]{BG0} and  \cite[p.\ 600]{BG1} about the spaceability of sets of functions in the disk algebra and holomorphic H\"older spaces  on the unit disk $\D$ whose restriction to  $ \T = \partial \D$ is nowhere regular in a certain sense. In the real analytic case, we show that the set of functions in the smooth disk algebra $A^\infty(\D)$ whose restriction to $\T$ is nowhere analytic is spaceable in $A^\infty(\D)$. This is related to the question on  \cite[p.\ 607]{BG1}, asking whether the smaller set of functions in $A^\infty(\D)$ whose restriction to $\T$ has a Pringsheim singularity at every point of $\T$ is spaceable in $A^\infty(\D)$.  Note that our result implies that the set of smooth nowhere real analytic functions on $[0,1]$ is spaceable in $C^\infty([0,1])$. For  general ultradifferentiable classes, our work complements the genericity results from \cite{Esser}, where  spaceability is not considered. Moreover, unlike \cite{Esser}, we also cover the quasianalytic case.

Our approach combines properties of lacunary Fourier series \cite{Katznelson,Zygmund}, expressing the principle that such series have the same regularity everywhere, together with a new general spaceability result for sets of periodic functions.
% defined by support restrictions and decay conditions on their Fourier coefficients.

\subsection{Spaceability  properties of the set of real analytic functions} Gurariy \cite{Gurariy1966} proved that the set of differentiable functions is not spaceable in $C([0,1])$; see the introduction of \cite{RP} for an overview of related results. In contrast, Bernal-Gonz\'alez \cite{BG} showed that the set of smooth functions is spaceable in $C((0,1))$, and asked  \cite[p.\ 1294, Question 3]{BG} whether the same holds for the smaller set  of real analytic functions. We give a positive answer to this question and, in fact, obtain a complete characterization of the closed subspaces of $C((0,1))$ consisting of real analytic functions. Furthermore, we show that every complemented subspace of $C((0,1))$ consisting of real analytic functions  is finite-dimensional.

Doma\'nski and Langenbruch \cite{DL}  characterized the Fr\'echet subspaces of the space of real analytic functions on an open set $\Omega \subseteq \R^d$ by using composition operators and abstract functional analysis. We rely heavily on this characterization for $\Omega = (0,1)$ (and its proof).  Bernal-Gonz\'alez's original approach is different and uses the theory of M\"untz spaces \cite{GL}. We also indicate how the spaceability of the set  of real analytic functions in $C((0,1))$ can be obtained via this way.

\section{Function spaces} \label{sect-FS}
In this preliminary section, we introduce the function spaces that will be used throughout the article.

\subsection{ Wiener-Beurling-type spaces} \label{subsect-WB}

We denote by $L^1_{2\pi}$ the Banach space  (of equivalence classes) of $2\pi$-periodic measurable functions $f: \R \to \C$ such that %Given $f \in L^1(\T)$, we write $\tilde{f}(t) = f(e^{it})$, $t \in \R$. We endow $L^1(\T)$ with the norm
$$
\|f\|_{L^1_{2\pi}} = \frac{1}{2\pi} \int_{-\pi}^{\pi} |f(t)| {\rm d} t < \infty.
$$

We define the \emph{Fourier coefficients} of an element $f \in L^1_{2\pi}$ as
$$
\widehat{f}(n) = \frac{1}{2\pi}  \int_{-\pi}^{\pi}  f(t) e^{-int} {\rm d} t, \qquad n \in \Z. 
$$

Let $E$ be a topological vector space that is continuously included in $L^1_{2\pi}$. For $\Gamma \subseteq \Z$, we write $E_\Gamma$ for the closed subspace of $E$ consisting of all $f \in E$ whose spectrum is contained in $\Gamma$, i.e., $\{ n\in \Z \mid \widehat{f}(n) \neq 0 \} \subseteq \Gamma$.

Let $a = (a_n)_{n \in  \Z}$ be a sequence of positive numbers with $a_n \to 0$ as $|n| \to \infty$. We denote by $\mathcal{F}\ell^\infty_a$ the space consisting of all $f \in L^1_{2\pi}$ such that
$$
|f|_{\mathcal{F}\ell^\infty_a} = \sup_{n \in \Z} \frac{|\widehat{f}(n)|}{a_n} < \infty.
$$
We endow $\mathcal{F}\ell^\infty_a$ with the norm
$$
\|f\|_{\mathcal{F}\ell^\infty_a} = \|f\|_{L^1_{2\pi}} + |f|_{\mathcal{F}\ell^\infty_a}, \qquad f \in \mathcal{F}\ell^\infty_a.
$$
Then, $\mathcal{F}\ell^\infty_a$ is a Banach space that is continuously included in $L^1_{2\pi}$.  
We define $\mathcal{F}c_{0,a}$ as the space consisting of all $f \in L^1_{2\pi}$ such that $\widehat{f}(n) = o(a_n)$ as $|n| \to \infty$. Note that $\mathcal{F}c_{0,a}$ is a closed subspace of  $\mathcal{F}\ell^\infty_a$.

\subsection{H\"older and Lipschitz regularity} 
We denote by $C_{2\pi}$ the Banach space of $2\pi$-periodic  continuous functions on $\R$. %, endowed with the norm %Given $f \in L^1(\T)$, we write $\tilde{f}(t) = f(e^{it})$, $t \in \R$. We endow $L^1(\T)$ with the norm
%$$
%\|f\|_{C_{2\pi}} = \sup_{|t| \leq \pi} |f(t)|.
%$$

%For $S \subseteq \C$, we denote by $C(S)$ the Banach space of continuous complex-valued functions on $S$, endowed with the sup-norm:
%$$
%\| f \|_{C(S)} = \sup_{z \in S } |f(z)|, \qquad f \in C(S).
%$$
%Let $\D$ be the open unit disk in $\C$.The \emph{disk algebra} $A(\D)$ is the Banach space of all continuous functions on $\overline{\D}$ that are analytic on $\D$, endowed with the norm $\| \, \, \|_{C(\D)}$. The following result is well-known, see e.g.\ \cite[Chapter 20, Theorem 4.2]{Conway}.

%\begin{proposition} \label{prop-DA}
%The restriction map $A(\D) \to (C(\T))_+, \, f \mapsto f_{\mid \T}$ is an isometric isomorphism.
%\end{proposition}
Let $\alpha \in (0,1)$. The \emph{$\alpha$-H\"older space} $\Lambda^\alpha_{2\pi}$ consists of all $f \in C_{2\pi}$ such that 
$$
|f|_{\Lambda^\alpha_{2\pi}} = \sup_{\substack{t \in [-\pi,\pi] \\ 0< |h| \leq 1}} \frac{|f(t+h) - f(t)|}{|h|^\alpha} < \infty. 
$$
We endow $\Lambda^\alpha_{2\pi}$ with the norm
$$
\|f\|_{\Lambda^\alpha_{2\pi}} = |f(0)| + |f|_{\Lambda^\alpha_{2\pi}}, \qquad f \in \Lambda^\alpha_{2\pi}.
$$
Then, $\Lambda^\alpha_{2\pi}$ is a Banach space that is continuously included in  $C_{2\pi}$. The \emph{little $\alpha$-H\"older space} $ \lambda^\alpha_{2\pi}$ consists of all $f \in C_{2\pi}$ such that  $f(t+h) - f(t) = o(|h|^\alpha)$ as $|h| \to 0$ uniformly for $t \in [-\pi,\pi]$. Note that $\lambda^\alpha_{2\pi}$ is a closed subspace of  $\Lambda^\alpha_{2\pi}$.
% For $0 < \alpha < \beta < 1$ it holds that $\Lambda^\beta_{2\pi} \subseteq \lambda^\alpha_{2\pi}$ with continuous inclusion.

%We will be mostly concerned with the spaces $C^\alpha(\T)$ and $c^\alpha(\T)$. 

%Every element of $f \in C^\alpha(\D)$ is uniformly continuous on $\D$ and therefore has a unique continuous extension to $\overline{\D}$. Therefore, we may view $C^\alpha(\D)$ as a subspace of $A(\D)$. In view of Proposition \ref{prop-DA}, the following result follows from  \cite[Theorem 1.2.7]{Sewell} and \cite[Theorem 41 and the remark at the end of Section 5]{HL}).
%\begin{proposition} \label{prop-CA}
%Let $\alpha \in (0,1)$. 
%\begin{itemize}
%\item[(i)] The restriction map $C^\alpha(\D) \to (C^\alpha(\T))_+, \, f \mapsto f_{\mid \T}$ is an isometric isomorphism.
%\item[(ii)] The restriction map $c^\alpha(\D) \to (c^\alpha(\T))_+, \, f \mapsto f_{\mid \T}$ is an isometric isomorphism.
%\end{itemize}
%\end{proposition}

Next, we introduce pointwise H\"older and Lipschitz conditions. Let $\alpha \in (0,1)$ and $t_0 \in \R$. A function $f: \R \to \C$ is said to belong to $\Lambda^\alpha(t_0)$ if $f(t_0+h) - f(t_0) = O(|h|^\alpha)$ as $h \to 0$, and to  $\lambda^\alpha(t_0)$ if  $f(t_0+h) - f(t_0) = o(|h|^\alpha)$ as $h \to 0$. Similarly, we write $f \in \operatorname{Lip}(t_0)$ if $f(t_0+h) - f(t_0) = O(|h|)$ as $h \to 0$.
%\begin{remark}
%Let $\alpha \in (0,1)$. Then, $f \in C^\alpha(\T)$ ($\tilde{f} \in c^\alpha(\T)$) if and only if $\tilde{f} \in C^\alpha(\R)$ ($\tilde{f} \in c^\alpha(\R)$). A similar remark holds pointwise  H\"older and Lipschitz conditions.

\subsection{Ultradifferentiable classes}
%Let $I \subseteq \R$ be an open interval. We write $C^\infty(I)$  for the Fr\'echet space of all smooth functions on $I$. 
We denote by $C^\infty_{2\pi}$ the Fr\'echet space of $2\pi$-periodic  smooth functions on $\R$. 
We have the following  characterization of  $C^\infty_{2\pi}$ in terms of Fourier coefficients:
\begin{proposition}\label{prop-FS} $\displaystyle C^\infty_{2\pi} = \bigcap_{k>0} \mathcal{F}\ell^\infty_{(|n|^{-k})}$.
\end{proposition}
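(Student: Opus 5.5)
We prove the two inclusions separately, as equalities of sets of (equivalence classes of) functions in $L^1_{2\pi}$. The tools are integration by parts for the Fourier coefficients of a smooth function, and termwise differentiation of rapidly convergent Fourier series.

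\emph{The inclusion $\subseteq$.} Let $f \in C^\infty_{2\pi}$ and $k \in \N$. Since $f^{(k)}$ is $2\pi$-periodic and continuous, integrating by parts $k$ times gives
$$
\widehat{f^{(k)}}(n) = (in)^k \widehat{f}(n), \qquad n \in \Z;
$$
the boundary terms vanish by periodicity. Hence, for $n \neq 0$,
$$
|\widehat{f}(n)| \leq |n|^{-k} \|f^{(k)}\|_{L^1_{2\pi}} \leq |n|^{-k} \sup_{t \in \R}|f^{(k)}(t)|.
$$
So $|f|_{\mathcal{F}\ell^\infty_{(|n|^{-k})}} < \infty$ for every integer $k$, and therefore for every real $k>0$ by monotonicity in $k$. (Here the weight at $n=0$ is read as $1$, or any positive constant.)

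\emph{The inclusion $\supseteq$.} Let $f \in L^1_{2\pi}$ satisfy $|\widehat{f}(n)| \leq C_k |n|^{-k}$ for all $n \neq 0$ and all $k>0$. Take $k = 2$. Then $\sum_n |\widehat{f}(n)| < \infty$, so the series
$$
g(t) = \sum_{n \in \Z} \widehat{f}(n) e^{int}
$$
converges uniformly to a continuous $2\pi$-periodic function $g$. Uniform convergence lets us integrate termwise, so $\widehat{g}(n) = \widehat{f}(n)$ for all $n$. By uniqueness of Fourier coefficients in $L^1_{2\pi}$, $f = g$ almost everywhere; this uniqueness can be obtained, for instance, from Fejér's theorem or from the density of trigonometric polynomials in $C_{2\pi}$.

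Next fix $j \in \N$. The differentiated series $\sum_n (in)^j \widehat{f}(n) e^{int}$ is dominated by $\sum_{n\neq 0} C_{j+2} |n|^{-2}$, so it converges uniformly. By the standard theorem on termwise differentiation of uniformly convergent series of derivatives, applied inductively in $j$, $g \in C^j$ with
$$
g^{(j)}(t) = \sum_{n \in \Z} (in)^j \widehat{f}(n) e^{int}.
$$
Thus $g$ is smooth and $2\pi$-periodic, i.e.\ $f = g \in C^\infty_{2\pi}$.

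The only points needing care are the identification of the $L^1$ class with its continuous representative $g$, which rests on the uniqueness theorem for Fourier coefficients, and the trivial convention for the weight at $n = 0$; the rest is routine. If a topological statement is also wanted, the estimates above show that the identity map is continuous in both directions between the Fréchet topology of $C^\infty_{2\pi}$ and the projective topology of the intersection.
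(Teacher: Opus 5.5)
Your proof is correct and complete. The paper gives no proof of this proposition and simply quotes it as a standard fact. Your argument is the standard proof it relies on: integration by parts gives $\subseteq$; for $\supseteq$ you use absolute convergence of the Fourier series, uniqueness of Fourier coefficients in $L^1_{2\pi}$, and termwise differentiation via the Weierstrass test.

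Your convention for the weight at $n=0$ is also the right reading, since the paper defines $\mathcal{F}\ell^\infty_a$ only for sequences with $a_n>0$ for all $n$. The later applications need only the set equality you prove, so the topological remark is a correct but optional extra.
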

%We denote by $A^\infty(\D)$ the space of all infinitely differentiable complex-valued functions on $\overline{\D}$ that are analytic on $\D$. For $n \in \N$ we set
%$$
%\| f \|_{n} = \max_{p \leq n }\sup_{z \in \D} |f^{(p)}(z)|, \qquad f \in A^\infty(\D).
%$$
%We endow $A^\infty(\D)$ with the Fr\'echet space topology generated by the system of norms $\{ \| \cdot \|_n \mid n \in \N\}$. 

%We write $C^\infty(\R)$  for the Fr\'echet space of all infinitely differentiable complex-valued functions on $\R$. %For $K \subseteq \R$ compact and $n \in \N$ we  set
%$$
%\| f \|_{K,n} = \max_{p \leq n} \sup_{t \in K} |f^{(p)}(t)|, \qquad f \in C^\infty(\R).
%$$
%We endow $C^\infty(\R)$ with the Fr\'echet space topology generated by the system of norms $\{ \| \cdot \|_n \mid n \in \N\}$. 
%We define $C^\infty(\T)$ as the space consisting of all $f \in C(\T)$ such that $\tilde{f} \in C^\infty(\R)$ and endow this space with the initial topology with respect to the mapping $C^\infty(\T) \to C^\infty(\R), \, f \mapsto \widetilde{f}$.

Next, we introduce ultradifferentiable classes defined via weight sequences; see \cite{Komatsu1} for more information. By a \emph{weight sequence}, we mean here a sequence $M = (M_p)_{p \in \N}$ of  positive numbers with $M^{1/p}_p \to \infty$ as $p \to \infty$ that satisfies the following two conditions:
\begin{itemize}
	\item[$(M.1)$]  $M$ is \emph{log-convex}:
	$$
	M^2_{p} \leq M_{p-1}M_{p+1}, \qquad \forall p \in \N \backslash \{0\}.
	$$
	\item[$(M.2)'$] $M$ is \emph{shift-stable}: There is $H >0$ such that
	$$M_{p+1} \leq H^{p+1} M_{p}, \qquad \forall p \in \N.$$
	%\item[$()$]   $M$ is \emph{non-quasianalytic}:
	%$$\displaystyle \sum_{p = 1}^\infty \frac{M_{p-1}}{M_p} < \infty;$$
	%\item[$(M.4)$]  The sequence $(M_p/p!)$ is log-convex.
	%  $\displaystyle \exists C >0 \, \forall p,q \in \N  : \frac{M_{p}}{p!}\frac{M_{q}}{q!} \ \leq C^{p+q} \frac{M_{p+q}}{(p+q)!}$.
\end{itemize}
Examples of weight sequences  are the Gevrey sequences $(p!^s)$ with $s >0$.

The \emph{associated function} $\omega_M$ of a weight sequence $M$ is defined as
$$
\omega_M(\rho) = \sup_{p \in \N} \log \frac{M_0\rho^p}{M_p}, \qquad \rho > 0,
$$
and $\omega_M(0) = 0$. Since $M^{1/p}_p \to \infty$ as $p \to \infty$, it holds that that $\omega_M(t) < \infty$ for all $t \geq 0$. Furthermore, $\log t = o(\omega_M(t))$ as $t \to \infty$.
%means that the sequence $M$ is log-convex, while $(M.3)'$ is known as the \emph{non-quasianalyticity condition}. Condition $%(M.4)$ is equivalent to the sequence $(M_p/p!)^{1/p}$ being almost increasing, that is, 
%$$
%\exists C > 0 \, \forall p \leq q : \left(\frac{M_p}{p!} \right)^{1/p} \leq C\left(\frac{M_q}{q!}\right)^{1/q}.
%$$
%In this regard, we mention that $(M.3)$ implies that  \cite[]{KomatsuI}
%$$
%\lim_{p \to \infty} \left ( \frac{M_p}{p!}\right)^{1/p} = \infty.
%$$
%We refer to \cite{Komatsu1}. Prime examples of weight sequences  satisfying all the above conditions are the sequences $p!^{\sigma}$ with $\sigma>1$.  

Let $M$ and $N$ be two weight sequences. We write $M \prec N$ if for every $h >0$ there is $C>0$ such that 
$$M_p \leq Ch^p N_p, \qquad \forall p \in \N. 
$$
%We use the notation $M \prec N$ to indicate that the latter inequality remains valid for all $H >0$ and a suitable $C = C_H >0$.

Let $M$ be a weight sequence. Given  a compact interval $J \subseteq \R$ and $h >0$,  we write $\mathcal{E}^{M,h}(J)$ for the Banach space consisting of all $f \in C^\infty(J)$ such that
$$
\| f \|_{\mathcal{E}^{M,h}(J)} = \sup_{p \in \N}\sup_{t \in J} \frac{|f^{(p)}(t)|}{h^pM_{p}} < \infty. 
$$
Let $I \subseteq \R$ be an open interval. We define the \emph{spaces of ultradifferentiable functions of Roumieu type $\{M\}$ and Beurling type $(M)$ on $I$} as
$$
\mathcal{E}^{\{M\}}(I) = \varprojlim_{J \Subset I} \varinjlim_{h \to \infty}\mathcal{E}^{M,h}(J), \qquad \mathcal{E}^{(M)}(I) =  \varprojlim_{J \Subset I} \varprojlim_{h \to 0^+}\mathcal{E}^{M,h}(J),
$$
where $J \Subset I$ means that $J$ is a compact subinterval of $I$.  Note that $\mathcal{E}^{\{p!\}}(I)$ is equal to the space $\mathcal{A}(I)$ of real analytic functions on $I$. We use $\mathcal{E}^{[M]}(I)$ as a common notation for $\mathcal{E}^{\{M\}}(I)$  and $\mathcal{E}^{(M)}(I)$; a similar convention will be used for other spaces as well.

Let $M$ and $N$ be two weight sequences. If $M \prec N$, then $\mathcal{E}^{\{M\}}(I) \subseteq \mathcal{E}^{(N)}(I)$ with continuous inclusion.

Let $M$ be a weight sequence. We denote by $\mathcal{E}^{[M]}_{2\pi}$ the closed subspace of $\mathcal{E}^{[M]}(\R)$ consisting of $2\pi$-periodic functions. We define
$$
\mathcal{F}\ell^\infty_{\{\omega_M\}} = \bigcup_{h>0} \mathcal{F}\ell^\infty_{(e^{-\omega_M(|n|/h)})}, \qquad \mathcal{F}\ell^\infty_{(\omega_M)} = \bigcap_{h>0} \mathcal{F}\ell^\infty_{(e^{-\omega_M(|n|/h)})}.
$$
We have the following characterization of  $\mathcal{E}^{[M]}_{2\pi}$ in terms of Fourier coefficients (see e.g.\ \cite[Proposition 2]{Debrouwere}\footnote{In \cite{Debrouwere}, the stronger condition (M.2) \cite{Komatsu1} instead of $(M.2)'$ is assumed. However, an inspection of the proof of \cite[Proposition 2]{Debrouwere} shows that $(M.2)'$ in fact suffices for this result.}): %or \cite[Satz 3.8]{Petzsche}.  
\begin{proposition}\label{FCultra}
Let $M$ be a weight sequence. Then, $\mathcal{E}^{[M]}_{2\pi} = \mathcal{F}\ell^\infty_{[\omega_M]}$. 
%$$
%\mathcal{E}^{\{M\}}_{2\pi} =   \bigcup_{h <0} \mathcal{F}\ell^\infty_{(e^{-\omega(|n|/h)})}, \qquad \mathcal{E}^{(M)}_{2\pi} =   \bigcap_{h <0} \mathcal{F}\ell^\infty_{(e^{-\omega(|n|/h)})}.
%$$
\end{proposition}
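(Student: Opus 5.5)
We prove the two inclusions separately, treating the Roumieu and Beurling cases in parallel. The tool connecting derivatives and Fourier coefficients is integration by parts. For $f \in C^\infty_{2\pi}$ we have $\widehat{f^{(p)}}(n) = (in)^p \widehat{f}(n)$, and hence
$$
|n|^p |\widehat{f}(n)| \leq \|f^{(p)}\|_{L^1_{2\pi}} \leq \sup_{t\in[-\pi,\pi]} |f^{(p)}(t)|.
$$
By periodicity, $\mathcal{E}^{[M]}_{2\pi}$ is determined by the single compact interval $J=[-\pi,\pi]$. So $f \in \mathcal{E}^{\{M\}}_{2\pi}$ means that $\sup_{t}|f^{(p)}(t)| \leq C h^p M_p$ for some $h,C$, and the Beurling case means this holds for every $h$ with $C=C_h$.

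\emph{Inclusion $\subseteq$.} From the estimate above, $|\widehat f(n)| \leq C \inf_p h^pM_p/|n|^p$. By the definition of $\omega_M$,
$$
\inf_p \frac{h^pM_p}{|n|^p} = M_0\, e^{-\omega_M(|n|/h)}
$$
for $n\neq 0$. Hence $f \in \mathcal{F}\ell^\infty_{(e^{-\omega_M(|n|/h)})}$ for the given $h$ in the Roumieu case, and for all $h$ in the Beurling case. The $L^1_{2\pi}$ part of the norm is trivially finite. (We interpret $h$ versus $1/h$ so that the quantifiers match.)

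\emph{Inclusion $\supseteq$.} Suppose $|\widehat f(n)| \leq C e^{-\omega_M(|n|/h)}$. Since $\log t = o(\omega_M(t))$, the coefficients decay faster than any power. By Proposition \ref{prop-FS}, $f \in C^\infty_{2\pi}$, and the Fourier series may be differentiated termwise. Therefore
$$
|f^{(p)}(t)| \leq C\sum_{n} |n|^p e^{-\omega_M(|n|/h)}.
$$
Next, $e^{-\omega_M(\rho)} \leq M_{q}/(M_0\rho^{q})$ for every $q$. We choose $q=p+2$, which gives
$$
|n|^p e^{-\omega_M(|n|/h)} \leq \frac{h^{p+2}M_{p+2}}{M_0\,|n|^2}.
$$
Summing over $n$ yields $\sup_t|f^{(p)}(t)| \leq C' h^{p+2} M_{p+2}$. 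Applying $(M.2)'$ twice gives
$$
M_{p+2} \leq H^{2p+3} M_p,
$$
so $\sup_t |f^{(p)}(t)| \leq C'' (hH^2)^p M_p$.

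This concludes the proof. In the Roumieu case the constant $hH^2$ is just another constant. In the Beurling case, $h$ is arbitrary, so $hH^2$ is also arbitrarily small (in the appropriate direction). We need only the finitely many extra factors $h^2H^3$, which are absorbed into the constant.

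The one genuine point, and the place where $(M.2)'$ replaces $(M.2)$, is this second inclusion. The loss of two derivatives, needed for summability of $\sum |n|^{-2}$, must be absorbed by a geometric factor. $(M.2)'$ does exactly this because the shift is bounded (by two). Condition $(M.2)$ would be needed only for shifts like $M_{2p}$, which do not arise here.
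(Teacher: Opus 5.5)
Your proof is correct and complete. The paper gives no argument of its own here: it cites \cite[Proposition 2]{Debrouwere}, with a footnote saying $(M.2)'$ suffices in place of $(M.2)$. Your self-contained argument is the standard one and makes that footnote explicit, since the only growth condition you use is $M_{p+2}\le H^{2p+3}M_p$ to absorb the two derivatives lost for summability.

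Two small remarks. First, $(M.1)$ is never used in your argument. Second, no $h$ versus $1/h$ reinterpretation is needed: the estimate carries the same $h$ from one side to the other, so the quantifiers in $\mathcal{E}^{[M]}_{2\pi}$ and $\mathcal{F}\ell^\infty_{[\omega_M]}$ already match directly.
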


\subsection{The disk algebra and related spaces}

We denote by $\D$  the open unit disk in $\C$ and by $\T$ its boundary. For  $f: \T \to \C$ we define the $2\pi$-periodic function $\widetilde{f}(t) = f(e^{it})$, $t \in \R$.

The \emph{disk algebra} $A(\D)$ is the Banach space of all continuous functions on $\overline{\D}$ that are analytic on $\D$. We have the following characterization of $A(\D)$ in terms of Fourier coefficients:
\begin{proposition}\label{disk-1}
The map $A(\D) \to C_{2\pi,\N}, \, f \mapsto \widetilde{f_{\mid \T}}$, is an isometric isomorphism. 
\end{proposition}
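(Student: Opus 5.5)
We need to show that the map $R\colon f \mapsto \widetilde{f_{\mid \T}}$ is well defined into $C_{2\pi,\N}$, linear, isometric, and surjective. Linearity is clear. The norms are the sup norms on $\overline{\D}$ and on $\R$ respectively.

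\textbf{Well-definedness.} Let $f \in A(\D)$. Since $f$ is continuous on $\overline{\D}$, the function $\widetilde{f_{\mid \T}}$ is continuous and $2\pi$-periodic. To see that its spectrum lies in $\N$, fix $n<0$ and $0<r<1$, and set $f_r(z)=f(rz)$. Since $f_r$ is holomorphic on a neighbourhood of $\overline{\D}$, Cauchy's theorem gives
\[
\frac{1}{2\pi}\int_{-\pi}^{\pi} f(re^{it})e^{-int}\,{\rm d}t=\frac{1}{2\pi i}\oint_{|z|=1} f_r(z) z^{-n-1}\,{\rm d}z=0,
\]
because $-n-1\ge 0$. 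The function $f$ is uniformly continuous on $\overline{\D}$, so $f(re^{it})\to f(e^{it})$ uniformly in $t$ as $r\to1^-$. Letting $r\to 1^-$ therefore yields $\widehat{\widetilde{f_{\mid\T}}}(n)=0$.

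\textbf{Isometry.} By the maximum modulus principle applied to the continuous function $f$ on the compact set $\overline{\D}$, holomorphic inside, we get $\sup_{\overline{\D}}|f|=\sup_{\T}|f|=\|R f\|_\infty$. In particular $R$ is injective.

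\textbf{Surjectivity.} This is the main step. Given $g\in C_{2\pi,\N}$, define the Poisson extension
\[
u(re^{it})=(P_r*g)(t)=\sum_{n\ge0}\widehat g(n) r^n e^{int}, \qquad 0\le r<1,
\]
and set $u=g$ on $\T$. The series converges absolutely for $r<1$ because the coefficients $\widehat g(n)$ are bounded. Only nonnegative frequencies occur, so $u(z)=\sum_{n\ge 0}\widehat g(n)z^n$ is holomorphic on $\D$. Since $P_r$ is an approximate identity and $g$ is uniformly continuous, $P_r*g\to g$ uniformly as $r\to1^-$. This gives continuity of $u$ on $\overline{\D}$: near a boundary point $e^{it_0}$ we write
\[
|u(re^{it})-g(t_0)|\le \|P_r*g-g\|_\infty+|g(t)-g(t_0)|,
\]
and both terms tend to $0$ as $re^{it}\to e^{it_0}$. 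Hence $u\in A(\D)$ and $Ru=g$.

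Combining the three steps, $R$ is a surjective linear isometry, i.e.\ an isometric isomorphism. The only nontrivial input is the approximate identity property of the Poisson kernel used in the surjectivity step; the rest is standard complex analysis.
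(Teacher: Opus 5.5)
Your proof is correct and complete. The paper states this proposition without proof, as a well-known fact. Your argument is the standard textbook proof it implicitly relies on: Cauchy's theorem on the circles $|z|=r$ together with uniform continuity to show the negative Fourier coefficients vanish, the maximum modulus principle for the isometry, and the Poisson extension $\sum_{n\ge0}\widehat g(n)z^n$ for surjectivity.
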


 Let $\alpha \in (0,1)$.  The \emph{holomorphic $\alpha$-H\"older space} $\Lambda^\alpha(\D)$  consists of all $f \in A(\D)$ such that 
$$
|f|_{\Lambda^\alpha(\D)} = \sup_{\substack{z,w \in \D \\ z \neq w}} \frac{|f(z) - f(w)|}{|z-w|^\alpha} < \infty. 
$$
We endow $\Lambda^\alpha(\D)$ with the norm
$$
\|f\|_{\Lambda^\alpha(\D)} = |f(0)| + |f|_{\Lambda^\alpha(\D)}, \qquad f \in \Lambda^\alpha(\D).
$$
Then, $\Lambda^\alpha(\D)$ is a Banach space that is continuously included in $A(\D)$. The \emph{holomorphic little $\alpha$-H\"older space} $\lambda^\alpha(\D)$ consists of all $f \in A(\D)$ such that  $f(z) - f(w) = o(|z-w|^\alpha)$ as $|z-w| \to 0$ uniformly for $z,w \in \D$. Note that $\lambda^\alpha(\D)$ is a closed subspace of  $\Lambda^\alpha(\D)$. We have the following characterization of $\Lambda^\alpha(\D)$ and $\lambda^\alpha(\D)$  in terms of Fourier coefficients:

\begin{proposition} \label{disk-2}  Let $\alpha \in (0,1)$. 
\begin{itemize}
 \item[(i)]  \cite[Theorem 1.2.7]{Sewell} The map $\Lambda^\alpha(\D) \to \Lambda^\alpha_{2\pi,\N}, \, f \mapsto \widetilde{f_{\mid \T}}$, is a topological isomorphism.
 \item[(ii)] \cite[Theorem 41 and the remark at the end of Section 5]{HL}  The map $\lambda^\alpha(\D) \to \lambda^\alpha_{2\pi,\N}, \, f \mapsto \widetilde{f_{\mid \T}}$, is a topological isomorphism.
\end{itemize}
\end{proposition}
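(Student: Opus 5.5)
The plan is to reduce both parts to Proposition \ref{disk-1} together with classical facts about harmonic extension. For $f \in \Lambda^\alpha(\D)$, the restriction $g = \widetilde{f_{\mid \T}}$ lies in $C_{2\pi,\N}$ by Proposition \ref{disk-1}. For $t$ and $0<|h|\leq 1$ we have $|e^{i(t+h)}-e^{it}| \leq |h|$. Hence $|g(t+h)-g(t)| \leq |f|_{\Lambda^\alpha(\D)} |h|^\alpha$, using continuity of $f$ on $\overline{\D}$ to pass from $\D$ to $\T$. Also $|g(0)| = |f(1)| \leq |f(0)| + 2^\alpha |f|_{\Lambda^\alpha(\D)}$. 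So the map in (i) is well defined, continuous and injective. The same estimate gives the little-Hölder version in (ii).

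For surjectivity in (i), take $g \in \Lambda^\alpha_{2\pi,\N}$ and let $f$ be its Poisson (equivalently, Cauchy) extension, $f(z) = \sum_{n\ge 0} \widehat{g}(n) z^n$. By Proposition \ref{disk-1}, $f$ lies in $A(\D)$ and has boundary values $g$. The main step is the Hardy--Littlewood estimate
$$
|f'(re^{i\theta})| \leq C |g|_{\Lambda^\alpha_{2\pi}} (1-r)^{\alpha-1}.
$$
This follows from writing $f'(re^{i\theta})$ as $\frac{1}{2\pi}\int \partial_\theta P_r(\theta-s)\,(g(s)-g(\theta))\,ds$ divided by $i r e^{i\theta}$, and using $\int |\partial_\theta P_r(u)|\,|u|^\alpha\,du \lesssim (1-r)^{\alpha-1}$. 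For $r$ near $0$ we use instead the trivial bound via the Cauchy formula.

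From the derivative bound one deduces the Hölder estimate on $\D$ by the standard path argument. For $z,w \in \D$ with $\delta = |z-w|$, move radially inward from $z$ and $w$ by distance $\delta$, then connect the two inner points along an arc at radius about $1-\delta$. Integrating the derivative bound over the radial segments gives $O(\delta^\alpha)$. The arc has length $O(\delta)$ and there $|f'| \lesssim \delta^{\alpha-1}$, which also gives $O(\delta^\alpha)$. If the points are near the center, we use the bounded derivative there.

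This yields $|f|_{\Lambda^\alpha(\D)} \le C\|g\|_{\Lambda^\alpha_{2\pi}}$. Hence the map is a continuous bijection with a continuous inverse, and by the open mapping theorem it is a topological isomorphism.

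For (ii), the same argument works with $o$ in place of $O$. Replace $|g|_{\Lambda^\alpha}$ by $\omega(\eta)=\sup_{|h|\le\eta}|g(t+h)-g(t)|/|h|^\alpha$. This quantity tends to $0$, and splitting the Poisson integral into $|u|\le\eta$ and $|u|>\eta$ shows $|f'(re^{i\theta})|(1-r)^{1-\alpha}\to 0$ uniformly as $r\to1$. The path argument then gives the $o(|z-w|^\alpha)$ condition. Alternatively, since $\lambda^\alpha$ is the closure of trigonometric polynomials (respectively polynomials) in $\Lambda^\alpha$, (ii) follows from (i) by density.

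I expect the main obstacle to be the derivative estimate and the path argument in the converse direction, i.e.\ the Hardy--Littlewood theorem. Here I would follow the classical computation with the Poisson kernel.
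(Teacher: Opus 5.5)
The paper gives no proof of this proposition: it quotes (i) from Sewell and (ii) from Hardy--Littlewood. Your proposal is correct and reconstructs the classical Privalov/Hardy--Littlewood argument behind those citations: a Poisson-kernel bound $|f'(z)|\lesssim(1-|z|)^{\alpha-1}$ (resp.\ $o\bigl((1-|z|)^{\alpha-1}\bigr)$), followed by integration along paths. Citing buys brevity; your route makes the statement self-contained. A few details need a line each when you write it out.
\begin{itemize}
\item The paper's seminorm on $\Lambda^\alpha_{2\pi}$ only controls increments with $|h|\le 1$. In the Poisson integral you must therefore bound $|g(\theta-u)-g(\theta)|$ for $1<|u|\le\pi$ by chaining at most four steps, which costs only a constant.
\item The path step is cleanest if you replace $z,w$ by their radial projections $z',w'$ onto the closed disk of radius $1-\delta$. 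This map is $1$-Lipschitz, so $|z'-w'|\le\delta$, and you can join $z',w'$ by a straight segment, on which $|f'|\lesssim\delta^{\alpha-1}$. Moving each point inward by $\delta$ does not put them on a common circle, so your ``arc'' description is loose as written.
\item In (ii), the part of that segment lying inside a fixed disk $|\zeta|\le r_0$ contributes $O(\delta)$. This is $o(\delta^\alpha)$ precisely because $\alpha<1$.
\item The open mapping theorem is superfluous. Your estimate together with $|f(0)|=|\widehat g(0)|\le\|g\|_\infty$ already bounds the inverse.
\item In the density alternative for (ii), you must approximate $g\in\lambda^\alpha_{2\pi,\N}$ by trigonometric polynomials whose spectrum stays in $\N$. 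Its Fej\'er means do this, and they converge to $g$ in $\Lambda^\alpha_{2\pi}$ when $g\in\lambda^\alpha_{2\pi}$. With an arbitrary approximating sequence you would also need the Riesz projection to be bounded on $\Lambda^\alpha_{2\pi}$.
\end{itemize}
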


Next, we discuss pointwise H\"older and Lipschitz conditions. Let $\alpha \in (0,1)$ and $z_0 \in \T$. A function $f: \T \to \C$ is said to belong to $\Lambda^\alpha(z_0)$ if $f(z) - f(z_0) = O(|z-z_0|^\alpha)$ as $z \to z_0$, $z \in \T$. The spaces $\lambda^\alpha(z_0)$ and $\operatorname{Lip}(z_0)$ are defined similarly.
\begin{remark}\label{rem-point}
Given $\alpha \in (0,1)$ and $t_0 \in \R$, it is clear that a function  $f: \T \to \C$ belongs to $\Lambda^\alpha(e^{it_0})$ if and only if $\widetilde{f} \in \Lambda^\alpha(t_0)$. A similar result holds for the pointwise $\lambda^\alpha$- and $\operatorname{Lip}$-spaces. 
\end{remark}
The \emph{smooth disk algebra} $A^\infty(\D)$ is the Fr\'echet space of all smooth functions on $\overline{\D}$ that are analytic on $\D$.  We have the following characterization of $A^\infty(\D)$ in terms of Fourier coefficients:
\begin{proposition}\label{disk-3}
The map $A^\infty(\D) \to C^\infty_{2\pi,\N}, \, f \mapsto \widetilde{f_{\mid \T}}$, is a topological isomorphism.
\end{proposition}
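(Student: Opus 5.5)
The plan is to reduce the statement to Proposition \ref{disk-1} and Proposition \ref{prop-FS}. First I construct the map, then I show it is a linear bijection, and finally I get continuity of the map and its inverse.

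\emph{The map is well defined and injective.} For $f \in A^\infty(\D)$, the restriction $f_{\mid \T}$ is smooth on $\T$. Hence $\widetilde{f_{\mid \T}}(t) = f(e^{it})$ is a $2\pi$-periodic smooth function on $\R$. By Proposition \ref{disk-1}, since $A^\infty(\D) \subseteq A(\D)$, its spectrum lies in $\N$. So the map lands in $C^\infty_{2\pi,\N}$, and it is injective because the map in Proposition \ref{disk-1} is injective.

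\emph{Surjectivity.} Let $g \in C^\infty_{2\pi,\N}$. By Proposition \ref{prop-FS}, $\widehat{g}(n) = O(n^{-k})$ for every $k$. Define
$$
f(z) = \sum_{n \geq 0} \widehat{g}(n) z^n, \qquad z \in \overline{\D}.
$$
The rapid decay of the coefficients shows that every formally differentiated series $\sum_n \widehat{g}(n)\, \partial_z^p (z^n)$ converges uniformly on $\overline{\D}$. Here only $\partial_z$ derivatives matter, since $f$ is holomorphic in $\D$ and $\partial_{\bar z}$ derivatives vanish there. Derivatives up to the boundary then extend continuously, so $f \in A^\infty(\D)$. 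By uniqueness of Fourier coefficients, $\widetilde{f_{\mid\T}} = g$.

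\emph{Continuity of both directions.} The spaces involved are Fréchet spaces, so it suffices to prove continuity in one direction and then apply the open mapping theorem. Continuity of $f \mapsto \widetilde{f_{\mid \T}}$ is immediate: the $t$-derivatives of $f(e^{it})$ are finite combinations of $e^{ikt} f^{(j)}(e^{it})$ with $j \le p$. These are bounded by the sup-norms of the complex derivatives $f^{(j)}$ on $\overline{\D}$, which in turn control the $C^\infty$ seminorms on $\overline{\D}$, again using holomorphy.

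Alternatively, one can prove the bound $\sup_{\overline{\D}}|f^{(p)}| \le \sum_n n^p |\widehat{g}(n)| \le C\,|g|_{\mathcal{F}\ell^\infty_{(n^{-p-2})}}$ directly. This works provided Proposition \ref{prop-FS} is read as a topological identity. The step needing the most care is precisely the identification of the Fréchet topology of $A^\infty(\D)$ via holomorphic derivatives up to the boundary, which is why I prefer the open mapping route.
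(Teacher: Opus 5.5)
Your proposal is correct. The paper gives no proof of this proposition; it is recorded as a standard characterization, alongside Proposition~\ref{disk-1}, so there is no argument of the paper's to compare yours with. Your reduction is a natural way to supply one:
\begin{itemize}
\item the spectrum condition and injectivity come from Proposition~\ref{disk-1};
\item surjectivity comes from the rapid decay of the Fourier coefficients in Proposition~\ref{prop-FS};
\item you then prove continuity of the restriction map and apply the open mapping theorem, which is legitimate because $C^\infty_{2\pi,\N}$ is closed in $C^\infty_{2\pi}$ and hence Fr\'echet.
\end{itemize}

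Two details deserve a line when you write it up.

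First, the identity $\frac{d^p}{dt^p} f(e^{it}) = \sum_{j=1}^{p} c_{p,j}\, e^{ijt} f^{(j)}(e^{it})$ involves differentiating at boundary points, and you do not justify this. The cleanest way is to differentiate $t \mapsto f(re^{it})$, which gives the same identity with $r^j e^{ijt} f^{(j)}(re^{it})$ in place of $e^{ijt} f^{(j)}(e^{it})$. Then let $r \to 1^-$, using that each $f^{(j)}$ is uniformly continuous on $\overline{\D}$, and invoke uniform convergence of derivatives.

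Second, in the continuity step you need $\sup_{\overline{\D}} |f^{(j)}|$ to be dominated by the $C^j$-seminorm of $f$. Your wording ("which in turn control the $C^\infty$ seminorms") states the reverse inequality. Nothing is actually lost: for holomorphic $f$ one has $\partial_x^a \partial_y^b f = i^b f^{(a+b)}$ in $\D$, so the two families of seminorms coincide.

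(If ``smooth on $\overline{\D}$'' is read as ``restriction of a $C^\infty$ function on a neighbourhood of $\overline{\D}$'', your surjectivity step also needs Whitney's extension theorem. It applies because $\overline{\D}$ is convex.)
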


\section{Spaceability of sets of nowhere regular periodic functions}
In this section, we  establish the spaceability of various sets of nowhere regular  periodic functions. We consider H\"older and Lipschitz regularity and ultradifferentiable classes, including real analyticity. 
\subsection{A spaceability criterion about Beurling-Wiener-type spaces} We start with showing a general spaceability result involving the spaces $\mathcal{F}\ell^\infty_{a}$ and $\mathcal{F}c_{0,a}$.
\begin{proposition}\label{main} Let $a = (a_n)_{n \in  \Z}$ be a sequence of positive numbers with $a_n \to 0$ as $|n| \to \infty$ and $\Gamma \subseteq \Z$ be infinite. Let $E$ be a topolgical vector space such that $E$ is continuously included in $L^1_{2\pi}$ and $  \mathcal{F}\ell^\infty_{a,\Gamma} \subseteq E_{\Gamma}$. Then, the set $E_\Gamma \backslash  \mathcal{F}c_{0,a}$ is spaceable in $E_\Gamma$.
\end{proposition}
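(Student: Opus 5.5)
Because $\Gamma$ is infinite and $a_n\to 0$, the plan is to build the subspace explicitly from a lacunary-type decomposition of $\Gamma$. Split $\Gamma$ into infinitely many pairwise disjoint infinite subsets $\Gamma_1,\Gamma_2,\dots$. Then define the closed subspace as the image of a suitable sequence space under a map into $\mathcal{F}\ell^\infty_{a,\Gamma}$. Concretely, for $\xi=(\xi_k)_{k\ge1}\in c_0$ (or $\ell^1$) set
$$
T\xi = \sum_{k} \xi_k\, g_k, \qquad \widehat{g_k}(n) = a_n \mathbf 1_{\Gamma_k}(n).
$$
Here $g_k$ is meant as the element of $\mathcal{F}\ell^\infty_{a,\Gamma}$ with these Fourier coefficients. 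This is where the obstacle lies: a bounded sequence $a_n\mathbf 1_{\Gamma_k}$ need not be the Fourier transform of an $L^1$ function. So I would instead choose $\Gamma_k$ very sparse and put $\widehat{g_k}(n)=a_n$ only on a subsequence where $\sum_{n\in\Gamma_k}a_n<\infty$. This is possible since $a_n\to0$: pick inductively $n\in\Gamma$ with $a_n\le 2^{-j}$. Then each $g_k$ is an absolutely convergent Fourier series, hence lies in $C_{2\pi}\subseteq L^1_{2\pi}$.

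So the refined construction is as follows. Enumerate a subsequence $(n_j)\subseteq\Gamma$ with $|n_j|$ strictly increasing and $a_{n_j}\le 2^{-j}$, and partition $\N$ into infinitely many infinite sets $N_k$. For $\xi\in\ell^\infty$ define $f_\xi$ by
$$
\widehat{f_\xi}(n_j)=\xi_k a_{n_j}\quad (j\in N_k),
$$
with all other coefficients zero. Since $\sum_j a_{n_j}<\infty$, the map $\xi\mapsto f_\xi$ is a bounded linear injection $\ell^\infty\to \mathcal{F}\ell^\infty_{a,\Gamma}$. Indeed, $\|f_\xi\|_{L^1_{2\pi}}\le\|f_\xi\|_\infty\le \|\xi\|_\infty\sum_j a_{n_j}$ and $|f_\xi|_{\mathcal F\ell^\infty_a}=\|\xi\|_\infty$. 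Conversely, $\|\xi\|_\infty = |f_\xi|_{\mathcal F\ell^\infty_a}\le\|f_\xi\|_{\mathcal F\ell^\infty_a}$, so this is an isomorphism onto its image $V$, and $V$ is closed in $\mathcal{F}\ell^\infty_{a,\Gamma}$. Moreover $V$ is infinite-dimensional and every nonzero $f_\xi\in V$ has some $\xi_k\ne0$, hence $\widehat{f_\xi}(n_j)/a_{n_j}=\xi_k\not\to0$ along $j\in N_k$. Thus $V\setminus\{0\}\subseteq E_\Gamma\setminus\mathcal F c_{0,a}$.

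The main step is closedness in $E_\Gamma$, whose topology is unrelated to the norm of $\mathcal{F}\ell^\infty_a$. I would take $V$ to be the closure $\overline{V}$ in $E_\Gamma$ and check that $\overline V$ still avoids $\mathcal Fc_{0,a}$ except at $0$. To do this, use that $E\hookrightarrow L^1_{2\pi}$ continuously, so each coefficient functional $f\mapsto\widehat f(n)$ is continuous on $E$. Therefore any $f\in\overline V$ satisfies the closed linear conditions satisfied on $V$: $\widehat f(n)=0$ for $n\notin\{n_j\}$, and $\widehat f(n_j)/a_{n_j}=\widehat f(n_i)/a_{n_i}$ whenever $i,j\in N_k$. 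Hence $\widehat f(n_j)=c_k a_{n_j}$ for $j\in N_k$, with constants $c_k$. If $f\neq0$, then by uniqueness of Fourier coefficients in $L^1_{2\pi}$ some $c_k\ne0$, so $f\notin\mathcal Fc_{0,a}$. Also $\overline V\subseteq E_\Gamma$ since $E_\Gamma$ is closed, and $\overline V$ is infinite-dimensional. This gives spaceability without identifying $\overline V$ precisely.
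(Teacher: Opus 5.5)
Your proposal is correct and follows essentially the same route as the paper: disjoint infinite blocks of $\Gamma$ with summable $a_n$, functions whose coefficients are a constant times $a_n$ on each block, and continuity of the coefficient functionals on $E$ to show that every element of the closure has coefficients $c_k a_n$ on the $k$-th block. Your phrasing via closed linear conditions (kernels of continuous functionals) is slightly cleaner than the paper's sequential limit argument, since it handles closures in an arbitrary, possibly non-metrizable, topological vector space; the detour through the image of $\ell^\infty$ and its closedness in $\mathcal{F}\ell^\infty_{a,\Gamma}$ is harmless but not needed.
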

\begin{proof}
Choose sequences $\gamma_k = (\gamma_{k,j})_{j \in \N} \subseteq \Gamma$, $k \in \N$, such that 
$$
\{\gamma_{k,j} \mid j \in \N\} \cap \{\gamma_{k',j} \mid j \in \N\} =\emptyset, \qquad k\neq k',
$$
and 
$$
\sum_{j = 0}^\infty a_{\gamma_{k,j}} < \infty.
$$
Define
$$
f_k(t) = \sum_{j = 0}^\infty a_{\gamma_{k,j}} e^{i\gamma_{k,j}t}, \qquad t \in \R.
$$
Then, $f_k \in C_{2\pi}$ and, for $n \in \Z$,
$$
\widehat{f}_k(n)  = \left\{
 \begin{array}{ll}
    a_{\gamma_{k,j}}, &\mbox{if } n = \gamma_{k,j} \mbox{ for some } j \in \N,\\
   0, &\mbox{otherwise}.
  \end{array}
\right.
$$
We obtain that $f_k \in  \mathcal{F}\ell^\infty_{a,\Gamma} \subseteq E$. Define 
$$
X = \overline{\operatorname{span} \{f_k \mid k \in \N\}}^{E}.
$$
Note that $X$ is infinite-dimensional as the family $\{f_k \mid k \in \N\}$ is linearly independent. Let $f \in X$. Then,
$$
f = \lim_{N \to \infty} \sum_{m=0}^{\nu_N} \alpha_{m,N} f_m \, \, \mbox{ in }E,
$$
for certain $\nu_N \in \N$ and $\alpha_{m,N} \in \C$. As $E$ is continuously included in $L^1_{2\pi}$, we find that, for $n \in \Z$,
$$
\widehat{f}(n)  =  \lim_{N \to \infty} \sum_{m=0}^{\nu_N} \alpha_{m,N} \widehat{f}_m(n) = \left\{
 \begin{array}{ll}
   \alpha_k a_{\gamma_{k,j}}, & \mbox{if } n = \gamma_{k,j}  \mbox{ for some } k,j \in \N,\\
   0, &\mbox{otherwise},
  \end{array}
  \right.
$$
where $\alpha_k = \lim_{N \to \infty} \alpha_{k,N} \in \C$. Hence, $f \in E_\Gamma$. If $f \neq 0$, there exists $k_0 \in \N$ such that $\alpha_{k_0} \neq 0$ (otherwise, $\widehat{f}(n) =0$ for all $n \in \Z$ and thus $f= 0$). We obtain that, for all $j \in \N$,
$$
\frac{|\widehat{f}(\gamma_{k_0,j})|}{a_{\gamma_{k_0,j}}} = |   \alpha_{k_0}|. 
$$
Consequently, $f \notin \mathcal{F}c_{0,a}$. Thus, $X$ is an infinite-dimensional closed subspace of $E_\Gamma$ with $X \backslash \{0\} \subseteq E_\Gamma \backslash  \mathcal{F}c_{0,a}$. This shows the result.
\end{proof}

\subsection{Lacunary Fourier series and regularity}

A set $\Gamma \subseteq \Z$ is called \emph{(Hadamard) lacunary} if $\Gamma = (-\Gamma_1) \cup \Gamma_2$ with $\Gamma_k \subseteq \N$, $k= 1,2$, either finite or $\Gamma_k = \{ \gamma_{k,j} \mid  j \in \N\}$ such that
$$
\inf_{j \in \N} \frac{\gamma_{k,j+1}}{\gamma_{k,j}} > 1.
$$
As a rule of thumb, for periodic functions $f$ whose spectrum is contained in a lacunary set, the regularity of $f$ in the neigborhood of any point determines its global regularity. For H\"older and Lipschitz regularity, this may be expressed as follows (cf.\ \cite[Chapter V]{Katznelson}, \cite[Chapter 5, \textsection 6]{Zygmund}):
\begin{proposition} \label{lac-Holder} Let $\Gamma \subseteq \Z$ be lacunary.
\begin{itemize}
\item[(i)] Let $\alpha \in (0,1)$.  For $f \in L^1_{2\pi,\Gamma}$ the following statements are equivalent:
\begin{itemize}
\item[(a)] $f \in \Lambda^\alpha_{2\pi}$ ($f \in  \lambda^\alpha_{2\pi}$).
\item[(b)]  There is $t_0 \in \R$ such that $f \in \Lambda^\alpha(t_0)$ ($f \in \lambda^\alpha(t_0)$).
\item[(c)]  $f \in  \mathcal{F}\ell^\infty_{(|n|^{-\alpha})}$ ( $f \in \mathcal{F}c_{0,(|n|^{-\alpha})}$).
\end{itemize}
\item[(ii)] Let  $t_0 \in \R$. If $f \in \operatorname{Lip}(t_0)$, then $f \in  \mathcal{F}\ell^\infty_{(|n|^{-1})}$.
%Moreover,  $(C^\alpha(\T))_{\Gamma} =   (\mathcal{F}\ell^\infty_{(|n|^{-\alpha})})_{\Gamma}$ as TVS.
\end{itemize}
\end{proposition}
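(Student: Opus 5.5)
Since the paper marks this as classical (cf.\ \cite[Chapter V]{Katznelson}, \cite[Chapter 5, \textsection 6]{Zygmund}), the plan is to prove the cycle (a)$\Rightarrow$(b)$\Rightarrow$(c)$\Rightarrow$(a) in (i), and to obtain (ii) from the same argument as (b)$\Rightarrow$(c). The step (a)$\Rightarrow$(b) is trivial. For (c)$\Rightarrow$(a) I would use the standard dyadic estimate. Write $f=\sum_{\gamma\in\Gamma}\widehat f(\gamma)e^{i\gamma t}$ with $|\widehat f(\gamma)|\le C|\gamma|^{-\alpha}$. Lacunarity gives $|\gamma_{k,j}|\ge c q^j$ with $q>1$, so the series converges absolutely and $f\in C_{2\pi}$. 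Fix $0<|h|\le1$ and split the spectrum at $|\gamma|\approx 1/|h|$. For the low frequencies use $|e^{i\gamma h}-1|\le|\gamma h|$; the terms $|\gamma|^{1-\alpha}|h|$ over $|\gamma|\le 1/|h|$ form a geometric-type sum bounded by $C'|h|^{\alpha}$. For the high frequencies use the bound $2$, and the sum of $|\gamma|^{-\alpha}$ over $|\gamma|>1/|h|$ is again geometric and $\le C'|h|^\alpha$. In the little-$o$ case the same split works: given $\varepsilon$, the coefficients beyond some $N$ are $\le\varepsilon|\gamma|^{-\alpha}$, and the finitely many remaining terms contribute $O(|h|)=o(|h|^\alpha)$.

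The main step is (b)$\Rightarrow$(c), i.e.\ showing that pointwise regularity at one point $t_0$ forces decay of the lacunary coefficients. After translating (which multiplies the coefficients by unimodular factors) and subtracting $f(t_0)$, we may assume $t_0=0$ and $f(0)=0$, so $|f(h)|\le C|h|^\alpha$ near $0$. Since $f$ is only known to lie in $L^1_{2\pi}$, the hypothesis means $f$ agrees a.e.\ with such a function near $0$. I would use the classical localized-kernel trick. Fix a Schwartz function $\phi$ whose Fourier transform $\widehat\phi$ is supported in $[1-\delta,1+\delta]$, with $\delta$ small relative to the lacunarity ratio, and with $\widehat\phi(1)=1$. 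For a spectral point $\gamma=\gamma_{2,j}>0$, put $K_\gamma(t)=\gamma\,\phi(\gamma t)$. Then
\[
\int_\R f(t)K_\gamma(-t)\,dt=\sum_{\lambda\in\Gamma}\widehat f(\lambda)\,\widehat\phi(\lambda/\gamma),
\]
and by lacunarity the only $\lambda$ with $\lambda/\gamma\in[1-\delta,1+\delta]$ is $\gamma$ itself, at least for large $j$; this isolates $\widehat f(\gamma)$. Negative frequencies are handled the same way with $\phi$ replaced by its reflection. To make the pairing with the non-integrable periodic $f$ legitimate, I would apply it to the Abel or Fej\'er means of $f$, which are trigonometric polynomials, and pass to the limit. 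One also has to check that $\widehat\phi(\lambda/\gamma)$ vanishes for all other $\lambda$, including all $\lambda$ of the opposite sign.

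To estimate the pairing, use $\int_\R K_\gamma=\widehat\phi(0)=0$ together with $f(0)=0$, and split $\int_\R f(t)K_\gamma(-t)\,dt$ into the regions $|t|\le\eta$ and $|t|>\eta$. Near $0$, the bound $|f(t)|\le C|t|^\alpha$ and $\int_\R|t|^\alpha\gamma|\phi(\gamma t)|\,dt=\gamma^{-\alpha}\int_\R|s|^\alpha|\phi(s)|\,ds$ give $O(\gamma^{-\alpha})$. Away from $0$, the rapid decay of $\phi$ together with the periodicity and integrability of $f$ gives $O(\gamma^{-N})$ for every $N$. Hence $|\widehat f(\gamma)|=O(\gamma^{-\alpha})$. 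For the $\lambda^\alpha$ version, choose $\eta$ small so that $|f(t)|\le\varepsilon|t|^\alpha$ on $|t|\le\eta$, which yields $o(\gamma^{-\alpha})$. Part (ii) is literally the same computation with $\alpha=1$: it only uses $|f(t)|\le C|t|$ near $t_0$ and the integrability of $|s\phi(s)|$.

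I expect the main obstacle to be the bookkeeping in this last step. One must handle the tail $\int_{|t|>\eta}$ for a merely $L^1$ periodic function, by periodizing $K_\gamma$ (the periodized kernel has absolutely summable Fourier series supported near $\pm\gamma$ within the lacunary gap). One must also make the choice of $\delta$ work uniformly for both halves $\Gamma_1,\Gamma_2$ and for all but finitely many $j$; the finitely many exceptional coefficients do not affect the asymptotic statement.
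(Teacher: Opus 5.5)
Your proposal is correct, and it supplies the classical proof that the paper only cites (Katznelson, Ch.~V; Zygmund, Ch.~5, \S 6). It uses dyadic splitting of the absolutely convergent lacunary series for (c)$\Rightarrow$(a), and a kernel whose Fourier transform is supported in $[1-\delta,1+\delta]$ with $\delta<1-1/q$ ($q>1$ the lacunarity ratio) to isolate one coefficient for (b)$\Rightarrow$(c) and (ii). The rapidly decaying Schwartz kernel, rather than a Fej\'er-type one, is what lets the endpoint case $\alpha=1$ in (ii) go through with no logarithmic loss. Two small harmless slips: the Abel/Fej\'er approximation is unnecessary, because $\sum_m \sup_{[-\pi,\pi]}|K_\gamma(\cdot+2\pi m)|<\infty$ already makes the pairing absolutely convergent and equal to a pairing with a trigonometric polynomial. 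That polynomial's spectrum lies near $+\gamma$ only, not $\pm\gamma$, and the isolation holds for every $j$, not just large $j$.
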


Next, we consider ultradifferentiability. We believe the following result is known, but we give a proof for the sake of completeness. 
\begin{proposition}\label{lacultra}
 Let $M$ be a weight sequence, $\Gamma \subseteq \Z$ be lacunary, and $f \in C_{2\pi,\Gamma}$. If $f_{\mid I}\in \mathcal{E}^{[M]}(I)$ for some non-empty open interval $I \subseteq \R$, then $f \in \mathcal{E}^{[M]}_{2\pi}$.
\end{proposition}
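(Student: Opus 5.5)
Via Proposition \ref{FCultra}, it suffices to show that $\widehat{f}(n)$ satisfies the bound $|\widehat f(n)| \le C e^{-\omega_M(|n|/h)}$ for some $h>0$ in the Roumieu case, and for all $h>0$ in the Beurling case. The standard device for lacunary series is to localize with a smooth bump and exploit the fact that lacunarity isolates each frequency. Fix a compact subinterval $J=[t_0-\delta,t_0+\delta]\Subset I$. Choose a test function $\varphi\in C^\infty(\R)$ supported in $J$ with $\int\varphi=1$, and moreover with $\widehat{\varphi}$ (its Fourier transform on $\R$) supported in a small interval $[-\eta,\eta]$. Such a $\varphi$ cannot have compact support, so instead I take $\varphi$ compactly supported in $J$ but of class $\mathcal{E}^{\{N\}}$ for some auxiliary non-quasianalytic weight. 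To keep the plan general, including the quasianalytic case $M=p!$, I instead use a de la Vallée Poussin / Fejér-type kernel adapted to the lacunary gaps: this is the Zygmund approach.

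Concretely, write $\Gamma_2=\{\gamma_j\}$ with $\gamma_{j+1}\ge q\gamma_j$, $q>1$ (the negative part is handled symmetrically). For each $j$ let $K_j(t)=e^{i\gamma_j t}\,F_{m_j}(t)$, where $F_m$ is the Fejér kernel of order $m$ and $m_j=\lfloor c\gamma_j\rfloor$ with $c<\min(1-1/q,q-1)$. Then the spectrum of $K_j$ lies in $(\gamma_j - m_j,\gamma_j+m_j)$, which meets $\Gamma$ only in $\gamma_j$ once $j$ is large. Hence $\frac1{2\pi}\int f(t+t_0)\overline{K_j(t)}\,dt=\widehat f(\gamma_j)e^{i\gamma_j t_0}$. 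The kernel $F_m$ concentrates near $0$: $|F_m(t)|\le C/(m t^2)$ away from $0$. The trouble is that this tail decays only polynomially, which is useless for the $e^{-\omega_M}$ bound. I therefore replace $F_m$ by a kernel $P_m$ which is a trigonometric polynomial of degree $\le m$ with $\widehat{P_m}(0)=1$ and $\sup_{\delta\le|t|\le\pi}|P_m(t)|\le e^{-c'm}$. An example is a normalized power $\big((1+\cos t)/2\big)^{m}$ rescaled, whose mass outside $[-\delta,\delta]$ is exponentially small relative to its $L^1$ norm. Exponential decay $e^{-c'\gamma_j}$ dominates $e^{-\omega_M(\gamma_j/h)}$ for every $h$, because $\omega_M(\rho)=o(\rho)$ under $(M.1)$ with $M_p^{1/p}/p$ not decaying. (I would check this; for $M$ growing slower than $p!$, $\omega_M$ could be superlinear. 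In that case the class is contained in entire-type functions, and I would handle it separately by taking $\delta$ small, or note that Proposition \ref{FCultra} still applies.)

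It remains to bound the local part $\frac1{2\pi}\int_{-\delta}^{\delta} f(t+t_0)\overline{K_j(t)}\,dt$. Here I multiply by a cutoff $\chi\in C^\infty_c((-2\delta,2\delta))$ equal to $1$ on $[-\delta,\delta]$ and integrate by parts $p$ times against $e^{-i\gamma_j t}$. The main obstacle is that neither $\chi$ nor $P_m$ is in the class $[M]$ in the quasianalytic case, and derivatives of $P_m$ produce factors $m^k\sim \gamma_j^k$, comparable to the oscillation. To avoid this, I would integrate by parts only against the combined phase and exploit the fact that $P_m\,e^{i\gamma_j t}$ has frequencies in $[\gamma_j-m,\gamma_j+m]\subseteq[(1-c)\gamma_j,(1+c)\gamma_j]$. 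So I write instead $\widehat f(\gamma_j)$ via $f^{(p)}$: since $(f^{(p)})^\wedge(\gamma_j)=(i\gamma_j)^p\widehat f(\gamma_j)$ and $f^{(p)}$ has the same lacunary spectrum on $I$ (differentiate termwise distributionally), apply the kernel argument to $f^{(p)}$ directly. This gives $|\gamma_j|^p|\widehat f(\gamma_j)|\le \|P_m\|_{L^1}\sup_{J}|f^{(p)}|+e^{-c'\gamma_j}\|f^{(p)}\|_{\text{global}}$. The global term is the remaining difficulty, since $f$ is only continuous globally. I would replace $f^{(p)}$ globally by moving the derivatives onto the kernel on the far region, where $P_m$ is exponentially small: the far region contributes at most $\gamma_j^{p}e^{-c'\gamma_j}\|f\|_\infty$ after using that $\partial^p$ of the kernel costs $(2\gamma_j)^p$, and I treat the boundary of the far region by choosing smooth partitions. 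Then $|\widehat f(\gamma_j)|\le C h^pM_p/\gamma_j^p+C 2^p e^{-c'\gamma_j}$. Taking the infimum over $p$ gives $e^{-\omega_M(\gamma_j/h)}$ up to constants, for some $h$ (Roumieu) or all $h$ (Beurling). Finally Proposition \ref{FCultra} concludes.
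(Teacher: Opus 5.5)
There is a genuine gap, in the far-region term, which your parenthetical remark already points toward. However you arrange the cutoffs, your method ends with an estimate of the form
\[
|\widehat f(\gamma_j)| \le C\,\frac{h^pM_p}{\gamma_j^p} + C\,2^p e^{-c'\gamma_j}\|f\|_\infty ,
\]
and the remainder cannot be improved within this method. On the far region you only know $\|f\|_\infty$. The gap condition forces the kernel degree to satisfy $m_j\le c\gamma_j$. A trigonometric polynomial of degree $m$ with mean $1$ cannot be smaller than $e^{-Cm}$ on a whole arc (Remez/Tur\'an-type inequality), so shrinking $\delta$ or changing the kernel does not help.

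You also cannot take the infimum of a sum termwise; you must choose $p=p(\gamma_j)$ near the minimiser of $h^pM_p\gamma_j^{-p}$. With that choice you get $O(e^{-\omega_M(\gamma_j/h)})$ only if $e^{-c\gamma_j}$ is negligible against $e^{-\omega_M(\gamma_j/h)}$. This holds, e.g., when $\omega_M(\rho)=O(\rho)$ in the Roumieu case or $\omega_M(\rho)=o(\rho)$ in the Beurling case, so your route does cover such classes.

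The proposition, however, is stated for all weight sequences in the sense of the paper. These include $M_p=p!^s$ with $0<s<1$, where $\omega_M(\rho)\asymp\rho^{1/s}$ is superlinear. They also include the Beurling class $\mathcal{E}^{(p!)}$, which requires $\widehat f(n)=O(e^{-|n|/h})$ for every $h>0$. In these cases your argument stops at ``$f$ is globally real analytic''. Taking $\delta$ small does not close this, and neither does invoking Proposition~\ref{FCultra}, which only translates the goal into the coefficient bound you are missing.

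A secondary point: in the quasianalytic case a fixed smooth cutoff for the transition region does not work either. Its derivatives cannot be bounded by $C^kM_k$; for $M=p!$ they produce superexponential factors at $p\approx\gamma_j/h$. You would need a sharp cutoff with the boundary terms of the integration by parts estimated directly, or $p$-dependent cutoffs.

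The missing idea is a local estimate with no global remainder at all. The paper uses Zygmund's lemma for lacunary sets: for a compact interval $J\subseteq I$ there is $C>0$ such that $|\widehat g(n)|\le C\big(\int_J|g(t)|^2\,dt\big)^{1/2}$ for all $g\in C_{2\pi,\Gamma}$ and $n\in\Z$. This expresses almost orthogonality of the lacunary exponentials on $J$, and it controls every coefficient by data on $J$ alone.

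Apply it to $f^{(p)}$, whose spectrum is still in $\Gamma$. Since $f^{(p)}$ is globally only a distribution, one passes through a mollification. This gives $|n|^p|\widehat f(n)|\le C|J|^{1/2}\|f\|_{\mathcal{E}^{M,h}(J)}h^pM_p$ for every $p$. Taking the infimum over $p$ yields $M_0C|J|^{1/2}\|f\|_{\mathcal{E}^{M,h}(J)}e^{-\omega_M(|n|/h)}$, with no restriction on the growth of $M$ and simultaneously in the Roumieu and Beurling cases. Proposition~\ref{FCultra} then concludes.
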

\begin{proof}
We may assume that $I \subseteq (-\pi,\pi)$. Fix a non-empty compact subinterval $J$ of $I$. Since $\Gamma$ is lacunary, \cite[p.\ 203, Lemma 6.5]{Zygmund} %and a simple rescaling argument
 implies that there is $C >0$ such that for all $f \in C_{2\pi,\Gamma}$ and $n \in \Z$
$$
 |\widehat{f}(n)| \leq C \left(\int_{J} |f(t)|^2 {\rm d} t \right)^{1/2}.
$$
Let $h >0$ and $f \in C_{2\pi,\Gamma}$ be such that $f_{\mid I} \in \mathcal{E}^{M,h}(I)$. Then, for all $p \in \N$ and $n \in \Z$,
$$
|n|^p|\widehat{f}(n)| = | \widehat{f^{(p)}}(n)| \leq C  \left(\int_{J} |f^{(p)}(t)|^2 {\rm d} t \right)^{1/2} \leq C|J|^{1/2} \| f \|_{\mathcal{E}^{M,h}(J)} h^p M^p.
$$
Hence,
$$
|\widehat{f}(n)| \leq  C|J|^{1/2} \| f \|_{\mathcal{E}^{M,h}(J)} \inf_{p \in \N} \frac{ h^p M_p}{|n|^p} = M_0C|J|^{1/2} \| f \|_{\mathcal{E}^{M,h}(J)} e^{-\omega_M(|n|/h)}.
$$
The result now follows from Proposition \ref{FCultra}.
\end{proof}

In the remainder of this section, we combine Proposition \ref{main} with the above two results to obtain several spaceability results about nowhere regular periodic functions.

\subsection{H\"older and Lipschitz regularity} We start with a result about continuous nowhere H\"older regular periodic functions.
%\begin{theorem} Let $\alpha \in (0,1)$ and $\Gamma \subseteq \Z$ be infinite. The set 
%\begin{equation}
%\label{set1}
%\{f \in \Lambda^\alpha_{2\pi, \Gamma} \mid f  \notin  \lambda^\alpha(t) \mbox{ for all } t \in \R\}
%\end{equation}
%is spaceable in $\Lambda^\alpha_{2\pi, \Gamma}$. 
%(and thus also in $\Lambda^\alpha_{2\pi}$).
%\end{theorem}
%\begin{proof}
%We may assume that $\Gamma$ is lacunary (otherwise, replace $\Gamma$ by a lacunary subset of $\Gamma$).  Proposition \ref{lac-Holder} yields that $\Lambda^\alpha_{2\pi, \Gamma}  =   \mathcal{F}\ell^\infty_{(|n|^{-\alpha}),\Gamma}$. By Proposition \ref{main}, $\Lambda^\alpha_{2\pi, \Gamma}  \backslash  \mathcal{F}c_{0,(|n|^{-\alpha})}$ is spaceable in $\Lambda^\alpha_{2\pi, \Gamma} $. Another application of Proposition \ref{lac-Holder} gives that the latter set coincides with the one in \eqref{set1}.
%\end{proof}

\begin{theorem} \label{thm2} Let $\Gamma \subseteq \Z$ be infinite. The set 
\begin{equation}
\label{set2}
\{f \in C_{2\pi,\Gamma} \mid f \notin  \Lambda^\alpha(t) \mbox{ for all } t \in \R \mbox{ and } \alpha \in (0,1)\}
\end{equation}
is spaceable in $C_{2\pi, \Gamma}$. % (and thus also in $\lambda^\alpha_{2\pi}$).
\end{theorem}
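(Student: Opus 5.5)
We reduce to the case where $\Gamma$ is lacunary and then apply Proposition \ref{main} with a weight sequence that decays more slowly than every power $|n|^{-\alpha}$. Since $\Gamma$ is infinite, it contains an infinite lacunary subset $\Gamma'$: pick elements of $\Gamma$ of the same sign whose absolute values at least double at each step. We have $C_{2\pi,\Gamma'} \subseteq C_{2\pi,\Gamma}$, and $C_{2\pi,\Gamma'}$ is closed in $C_{2\pi,\Gamma}$ because Fourier coefficients are continuous functionals. So a closed infinite-dimensional subspace of $C_{2\pi,\Gamma'}$ lying in the set \eqref{set2} (together with $0$) is also closed in $C_{2\pi,\Gamma}$. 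Hence we may assume $\Gamma$ is lacunary.

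Next we choose a weight $a_n = 1/\log(|n|+2)$. Then $a_n \to 0$, and for every $\alpha\in(0,1)$ we have $|n|^{-\alpha} = o(a_n)$. The only point needing care is to check that $\mathcal{F}\ell^\infty_{a,\Gamma} \subseteq C_{2\pi,\Gamma}$. Absolute summability of $a_n$ over $\Gamma$ fails in general, since $\sum_j 1/\log(2^j) = \infty$. So instead we apply the proof of Proposition \ref{main} directly: the $f_k$ constructed there have $\widehat{f}_k(\gamma_{k,j}) = a_{\gamma_{k,j}}$ along sparse subsequences chosen with $\sum_j a_{\gamma_{k,j}}<\infty$. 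Those $f_k$ lie in $C_{2\pi,\Gamma}$, and that is all the argument actually uses. Alternatively, we replace $E$ by the Banach space $E=C_{2\pi,\Gamma}+\mathcal{F}\ell^\infty_{a,\Gamma}$, or, more cleanly, we run the argument of Proposition \ref{main} verbatim with $E = C_{2\pi,\Gamma}$ and only $f_k\in E$ required. In either version we obtain a closed infinite-dimensional subspace $X\subseteq C_{2\pi,\Gamma}$ such that every $f\in X\setminus\{0\}$ satisfies $|\widehat f(\gamma_{k_0,j})|/a_{\gamma_{k_0,j}} = |\alpha_{k_0}|>0$ for all $j$.

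Finally, let $f\in X\setminus\{0\}$, and suppose $f\in\Lambda^\alpha(t)$ for some $t\in\R$ and $\alpha\in(0,1)$. Since $f\in L^1_{2\pi,\Gamma}$ with $\Gamma$ lacunary, Proposition \ref{lac-Holder}(i), (b)$\Rightarrow$(c), gives $|\widehat f(n)| = O(|n|^{-\alpha})$. Then
$$|\alpha_{k_0}| = \frac{|\widehat f(\gamma_{k_0,j})|}{a_{\gamma_{k_0,j}}} \leq C\,|\gamma_{k_0,j}|^{-\alpha}\log(|\gamma_{k_0,j}|+2)\to 0,$$
which is a contradiction. Hence $X\setminus\{0\}$ is contained in the set \eqref{set2}.

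The main obstacle is the mismatch between the hypothesis $\mathcal{F}\ell^\infty_{a,\Gamma}\subseteq E_\Gamma$ in Proposition \ref{main} and the fact that slowly decaying coefficients need not give continuous functions. The resolution is that the proof only needs the particular $f_k$, built from summable subsequences, to lie in $E$, so we rerun that argument rather than quote the statement.
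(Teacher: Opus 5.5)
Your argument is correct and is essentially the paper's proof. It uses the same weight $a_n \asymp 1/\log|n|$, the same reduction to lacunary $\Gamma$, the construction of Proposition \ref{main}, and Proposition \ref{lac-Holder} (b)$\Rightarrow$(c). The one difference is how the hypothesis $\mathcal{F}\ell^\infty_{a,\Gamma}\subseteq C_{2\pi,\Gamma}$ is handled: the paper thins $\Gamma$ further so that $\sum_{\gamma\in\Gamma}a_\gamma<\infty$ and applies Proposition \ref{main} verbatim, while you note that its proof only needs $f_k\in E$, which amounts to the same thing.

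Drop the side remark about $E=C_{2\pi,\Gamma}+\mathcal{F}\ell^\infty_{a,\Gamma}$. The closure taken there need not lie in $C_{2\pi,\Gamma}$ unless you intersect with $C_{2\pi,\Gamma}$, and the remark is not needed anyway.
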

\begin{proof}
Define $a = (1/\log(e+|n|))$. We may assume that $\Gamma$ is lacunary and that 
$\sum_{\gamma \in \Gamma} a_\gamma < \infty$  (otherwise, replace $\Gamma$ by a  subset of $\Gamma$ that satisfies these properties). Then, $\mathcal{F}\ell^\infty_{a,\Gamma} \subseteq C_{2\pi,\Gamma}$. Since $|n|^{-\alpha}= o(a_n)$ as $|n| \to \infty$ for all $\alpha \in (0,1)$, we obtain that
$$
\bigcup_{\alpha \in (0,1)} \mathcal{F}\ell^\infty_{(|n|^{-\alpha})} \subseteq \mathcal{F}c_{0,a}.
$$
By Proposition \ref{main}, $C_{2\pi,\Gamma} \backslash  \mathcal{F}c_{0,a}$ is spaceable in $C_{2\pi,\Gamma}$. Hence, also the bigger set
$$
C_{2\pi,\Gamma} \backslash \bigcup_{\alpha \in (0,1)}  \mathcal{F}\ell^\infty_{(|n|^{-\alpha})} 
$$
 is spaceable in $C_{2\pi,\Gamma}$. Proposition \ref{lac-Holder} gives that the latter set coincides with the one in \eqref{set2}.
\end{proof}

The following result gives a positive answer to the question posed on \cite[p.\ 600]{BG1}.

\begin{corollary} The set 
$$
\{f \in A(\D) \mid f_{\mid \T} \notin  \Lambda^\alpha(z) \mbox{ for all } z \in \T \mbox{ and } \alpha \in (0,1)\}
$$
is spaceable in $A(\D)$.
\end{corollary}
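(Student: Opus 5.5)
The corollary is Theorem \ref{thm2} with $\Gamma = \N$, transported to the disk through the isometric isomorphism of Proposition \ref{disk-1}. Let $\Phi\colon A(\D) \to C_{2\pi,\N}$, $f \mapsto \widetilde{f_{\mid \T}}$, be that isomorphism.

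\textbf{Step 1.} Apply Theorem \ref{thm2} with $\Gamma = \N$, which is infinite. This gives a closed infinite-dimensional subspace $Y \subseteq C_{2\pi,\N}$ such that every $g \in Y \backslash \{0\}$ satisfies $g \notin \Lambda^\alpha(t)$ for all $t \in \R$ and all $\alpha \in (0,1)$.

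\textbf{Step 2.} Set $X = \Phi^{-1}(Y)$. Since $\Phi$ is a linear homeomorphism onto $C_{2\pi,\N}$, the space $X$ is closed in $A(\D)$ and infinite-dimensional.

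\textbf{Step 3.} Take $f \in X \backslash \{0\}$. Then $\widetilde{f_{\mid \T}} = \Phi(f) \in Y \backslash \{0\}$. Let $z \in \T$ and $\alpha \in (0,1)$, and write $z = e^{it}$ with $t \in \R$. We know $\widetilde{f_{\mid \T}} \notin \Lambda^\alpha(t)$. By Remark \ref{rem-point}, $f_{\mid \T} \in \Lambda^\alpha(e^{it})$ holds if and only if $\widetilde{f_{\mid \T}} \in \Lambda^\alpha(t)$, so $f_{\mid \T} \notin \Lambda^\alpha(z)$. Hence $X \backslash \{0\}$ lies in the set in the statement, which proves spaceability.

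There is no real obstacle. The only points to check are that $\Phi$ preserves closedness and dimension, which holds because it is a topological isomorphism, and that the pointwise Hölder condition on $\T$ matches the one on $\R$, which is exactly Remark \ref{rem-point}.
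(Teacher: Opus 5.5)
Your proposal is correct and is essentially the paper's proof, which also just combines Theorem \ref{thm2} with $\Gamma = \N$, the isometric isomorphism of Proposition \ref{disk-1}, and Remark \ref{rem-point}. You spell out the transport of the closed subspace $Y$ to $X = \Phi^{-1}(Y)$ and the pointwise check, which the paper leaves implicit.
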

\begin{proof}
This follows from Proposition \ref{disk-1}, Remark \ref{rem-point}, and Theorem \ref{thm2} with $\Gamma = \N$.
\end{proof}

\begin{theorem} \label{thm3} Let $\alpha \in (0,1)$ and $\Gamma \subseteq \Z$ be infinite. The set 
$$
\{f \in \lambda^\alpha_{2\pi,\Gamma} \mid f \notin  \Lambda^\beta(t) \mbox{ for all } t \in \R \mbox{ and } \beta \in (\alpha,1)\}
$$
is spaceable in $ \lambda^\alpha_{2\pi, \Gamma}$.
% (and thus also in $\lambda^\alpha_{2\pi}$).
\end{theorem}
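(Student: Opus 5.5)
We mimic the proof of Theorem \ref{thm2}, now with a weight sitting just below $|n|^{-\alpha}$. Define
$$
a = \left(\frac{1}{|n|^{\alpha}\log(e+|n|)}\right)_{n \in \Z},
$$
with the convention that $|n|^\alpha$ is replaced by $1$ at $n=0$ (the value at a single index is irrelevant). Then $a_n \to 0$, $a_n = o(|n|^{-\alpha})$, and $|n|^{-\beta} = o(a_n)$ for every $\beta \in (\alpha,1)$. Since a subspace of $\lambda^\alpha_{2\pi,\Gamma'}$ with $\Gamma' \subseteq \Gamma$ is also a subspace of $\lambda^\alpha_{2\pi,\Gamma}$, and closedness is preserved ($\lambda^\alpha_{2\pi,\Gamma'}$ is closed in $\lambda^\alpha_{2\pi,\Gamma}$), we may replace $\Gamma$ by an infinite lacunary subset. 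We choose it with $0\notin\Gamma$ and sufficiently sparse growth, for example $|\gamma_{j+1}| \ge 2|\gamma_j|$.

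The key inclusion needed for Proposition \ref{main} is $\mathcal{F}\ell^\infty_{a,\Gamma} \subseteq \lambda^\alpha_{2\pi,\Gamma}$. Let $f \in \mathcal{F}\ell^\infty_{a,\Gamma}$, so $|\widehat f(n)| = o(|n|^{-\alpha})$ on $\Gamma$. Since $\sum_{\gamma\in\Gamma} a_\gamma<\infty$, $f$ is continuous. Proposition \ref{lac-Holder}(i), direction (c)$\Rightarrow$(a) in the little-$o$ version, then gives $f \in \lambda^\alpha_{2\pi}$. This is the main step, because we rely on the lacunary theory: $\widehat f(n)=o(|n|^{-\alpha})$ on a lacunary spectrum implies $f\in\lambda^\alpha_{2\pi}$. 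If one wanted to check it directly, split $f(t+h)-f(t)$ into frequencies $|n|\le 1/|h|$, bounded by $|h|\sum |n||\widehat f(n)|$, and $|n|>1/|h|$, bounded by $2\sum|\widehat f(n)|$. Lacunarity makes both sums geometric and dominated by their extreme terms, giving $o(|h|^\alpha)$.

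Next, $\lambda^\alpha_{2\pi}$ is continuously included in $C_{2\pi}$, hence in $L^1_{2\pi}$. Proposition \ref{main}, applied with $E=\lambda^\alpha_{2\pi}$, therefore yields that $\lambda^\alpha_{2\pi,\Gamma}\setminus \mathcal{F}c_{0,a}$ is spaceable in $\lambda^\alpha_{2\pi,\Gamma}$. Since $|n|^{-\beta}=o(a_n)$, we have $\bigcup_{\beta\in(\alpha,1)}\mathcal{F}\ell^\infty_{(|n|^{-\beta})}\subseteq \mathcal{F}c_{0,a}$. Hence the larger set $\lambda^\alpha_{2\pi,\Gamma}\setminus \bigcup_{\beta\in(\alpha,1)}\mathcal{F}\ell^\infty_{(|n|^{-\beta})}$ is also spaceable.

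It remains to identify this set with the one in the statement. Take $f\in\lambda^\alpha_{2\pi,\Gamma}$ with $f\in\Lambda^\beta(t)$ for some $t\in\R$ and $\beta\in(\alpha,1)$. Because $\Gamma$ is lacunary, Proposition \ref{lac-Holder}(i), direction (b)$\Rightarrow$(c), gives $f\in\mathcal{F}\ell^\infty_{(|n|^{-\beta})}$. So every $f$ in the spaceable set above is nowhere $\Lambda^\beta$ for every $\beta\in(\alpha,1)$, i.e.\ it lies in the set of the statement. This completes the proof.
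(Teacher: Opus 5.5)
Your proposal is correct and follows essentially the same route as the paper. You use the same weight $a_n = |n|^{-\alpha}/\log(e+|n|)$, reduce to a lacunary $\Gamma$, get $\mathcal{F}\ell^\infty_{a,\Gamma} \subseteq \lambda^\alpha_{2\pi,\Gamma}$ from Proposition~\ref{lac-Holder}(i), apply Proposition~\ref{main}, and use the implication (b)$\Rightarrow$(c) of Proposition~\ref{lac-Holder}(i) to place the resulting subspace inside the set of the statement.
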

\begin{proof}
We may assume that  $\Gamma$ is lacunary. Proposition \ref{lac-Holder} implies that $\lambda^\alpha_{2\pi,\Gamma} =  \mathcal{F}c_{0,(|n|^{-\alpha}),\Gamma}$. Define $a = (|n|^{-\alpha}/\log(e+|n|))$.  Since $a_n = o(|n|^{-\alpha})$ and $|n|^{-\beta}= o(a_n)$ as $|n| \to \infty$ for all $\beta \in (\alpha,1)$, we obtain that
$$
\bigcup_{\beta \in (\alpha,1)} \mathcal{F}\ell^\infty_{(|n|^{-\beta})} \subseteq \mathcal{F}c_{0,a} \subseteq \mathcal{F}\ell^\infty_{a} \subseteq  \mathcal{F}c_{0,(|n|^{-\alpha})}.
$$
The remainder of the proof is similar to the one of  Theorem \ref{thm2}.
%By Proposition \ref{main}, $\lambda^\alpha_{2\pi,\Gamma} \backslash  \mathcal{F}c_{0,a}$ is spaceable in $\lambda^\alpha_{2\pi}$. Hence, also the bigger set
%$$
%\lambda^\alpha_{2\pi,\Gamma} \backslash \bigcup_{\alpha < \beta < 1} \mathcal{F}\ell^\infty_{(|n|^{-\beta})} 
%$$
% is spaceable in $\lambda^\alpha_{2\pi}$. Another application of Proposition \ref{lac-Holder} gives that the latter %set coincides with the one from \eqref{set1}.
\end{proof}
\begin{corollary} \label{cor-2}  Let $\alpha \in (0,1)$. The set
$$
\{f \in \lambda^\alpha(\D) \mid  f_{\mid \T} \notin  \Lambda^\alpha(z) \mbox{ for all } z \in \T \mbox{ and }  \beta \in (\alpha,1) \}
$$
is spaceable in $ \lambda^\alpha(\D)$.
\end{corollary}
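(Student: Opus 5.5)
Corollary \ref{cor-2} will follow from Theorem \ref{thm3} applied with $\Gamma = \N$, in the same way that the previous corollary follows from Theorem \ref{thm2}. (The condition in the statement should be read as $f_{\mid \T} \notin \Lambda^\beta(z)$ for all $z \in \T$ and $\beta \in (\alpha,1)$, matching Theorem \ref{thm3}.) The plan is to transport the closed subspace given by Theorem \ref{thm3} back to $\lambda^\alpha(\D)$ along the isomorphism $\Phi \colon \lambda^\alpha(\D) \to \lambda^\alpha_{2\pi,\N}$, $f \mapsto \widetilde{f_{\mid \T}}$, from Proposition \ref{disk-2}(ii).

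First, apply Theorem \ref{thm3} with $\Gamma = \N$. This gives a closed infinite-dimensional subspace $X \subseteq \lambda^\alpha_{2\pi,\N}$ such that every $g \in X \backslash \{0\}$ satisfies $g \notin \Lambda^\beta(t)$ for all $t \in \R$ and all $\beta \in (\alpha,1)$.

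Second, set $Y = \Phi^{-1}(X)$. Since $\Phi$ is a topological isomorphism, $Y$ is a closed infinite-dimensional subspace of $\lambda^\alpha(\D)$.

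Third, take $f \in Y \backslash \{0\}$. Then $\widetilde{f_{\mid \T}} = \Phi(f) \in X \backslash \{0\}$, because $\Phi$ is injective. Fix $z = e^{it_0} \in \T$ and $\beta \in (\alpha,1)$. We know $\widetilde{f_{\mid \T}} \notin \Lambda^\beta(t_0)$. By Remark \ref{rem-point}, this is equivalent to $f_{\mid \T} \notin \Lambda^\beta(z)$. Since $z$ and $\beta$ were arbitrary, $Y \backslash \{0\}$ is contained in the set in the statement, which proves spaceability.

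No step is a real obstacle; the argument only uses that $\Phi$ is a topological linear isomorphism. The one point to be careful about is the order of the two steps: closedness and infinite dimension must be transported through $\Phi$, while the pointwise non-regularity is transferred via Remark \ref{rem-point}.
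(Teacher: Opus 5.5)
Your proposal is correct and is the paper's own argument: Theorem \ref{thm3} with $\Gamma=\N$, transported to $\lambda^\alpha(\D)$ through the isomorphism of Proposition \ref{disk-2}(ii), with Remark \ref{rem-point} converting the pointwise condition on $\T$. Your reading of $\Lambda^\alpha(z)$ as $\Lambda^\beta(z)$ is the intended one; taken literally the set would be empty, since every $f \in \lambda^\alpha(\D)$ has $f_{\mid \T} \in \Lambda^\alpha(z)$ for all $z \in \T$.
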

\begin{proof}
This follows from Proposition \ref{disk-2}, Remark \ref{rem-point}, and Theorem \ref{thm3} with $\Gamma = \N$.
\end{proof}
\begin{remark}
Let $\alpha \in (0,1)$. Corollary \ref{cor-2} implies that the set
$$
\{f \in \lambda^\alpha(\D) \mid  f_{\mid \T} \notin  \operatorname{Lip}(z) \mbox{ for all } z \in \T \}
$$
is spaceable in $ \lambda^\alpha(\D)$. This gives a positive answer to the first part of question (a) posed on \cite[p.\ 15]{BG0}.
\end{remark}

%For $S \subseteq \C$, we denote by $C(S)$ the Banach space of continuous complex-valued functions on $S$, endowed with the sup-norm:
%$$
%\| f \|_{C(S)} = \sup_{z \in S } |f(z)|, \qquad f \in C(S).
%$$

%\begin{remark}\label{hollip}
%Let $\alpha \in (0,1)$. We define $\Lambda^\alpha(\D)$  as the space of all $f \in A(\D)$ such that 
%$$
%|f|_{\Lambda^\alpha(\D)} = \sup_{\substack{z,w \in \D \\ z \neq w}} \frac{|f(z) - f(w)|}{|z-w|^\alpha} < \infty. 
%%$$
%We endow $\Lambda^\alpha(\D)$ with the norm
%$$
%\|f\|_{\Lambda^\alpha(\D)} = |f(0)| + |f|_{\Lambda^\alpha(\D)}, \qquad f \in \Lambda^\alpha(\D).
%$$
%Then, $\Lambda^\alpha(\D)$ is a Banach space. We write $\lambda^\alpha(\D)$ for the space of all $f \in A(\D)$ such that  $f(z) - f(w) = o(|z-w|^\alpha)$ as $|z-w| \to 0$ uniformly for $z,w \in \D$. Note that $\lambda^\alpha(\D)$ is a closed subspace of  $\Lambda^\alpha(\D)$.

%The linear map $f \mapsto \tilde{f}$ induces topological isomorphisms $\Lambda^\alpha(\D) \to \Lambda^\alpha_{2\pi,\N}$  \cite[Theorem 1.2.7]{Sewell} and  $\lambda^\alpha(\D) \to \lambda^\alpha_{2\pi,\N}$  \cite[Theorem 41 and the remark at the end of Section 5]{HL}.  Hence, Theorem \ref{thm2} with $\Lambda = \N$ implies that the set 
%$$
%\{f \in \lambda^\alpha(\D) \mid  f_{\mid \T} \notin  \Lambda^\alpha(z) \mbox{ for all } z \in \T,  \beta \in (\alpha,1) \}
%$$
%is spaceable in $ \lambda^\alpha(\D)$. Consequently, also the bigger set
%%
%\end{remark}

For $\alpha \in (0,1]$ we define the Fr\'echet space
$$
\widetilde{\Lambda}^\alpha_{2\pi} = \varprojlim_{\beta \to \alpha-} \Lambda^\beta_{2\pi}.
$$
\begin{theorem}\label{thm3bis}  Let $\Gamma \subseteq \Z$ be infinite.
\begin{itemize}
\item[(i)] Let $\alpha \in (0,1)$. The set
$$
\{f \in \widetilde{\Lambda}^\alpha_{2\pi,\Gamma}  \mid f \notin  \Lambda^\alpha(t) \mbox{ for all } t \in \R\}
$$
is spaceable in $\widetilde{\Lambda}^\alpha_{2\pi, \Gamma}$. %(and thus also in $\widetilde{\Lambda}^\alpha_{2\pi}$).
\item[(ii)] The set 
$$
\{f \in \widetilde{\Lambda}^1_{2\pi,\Gamma} \mid f \notin  \operatorname{Lip}(t) \mbox{ for all } t \in \R\}
$$
is spaceable in $\widetilde{\Lambda}^1_{2\pi, \Gamma}$. %(and thus also in $\widetilde{\Lambda}^1_{2\pi}$).
\end{itemize}
\end{theorem}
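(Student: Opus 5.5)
We follow the scheme of Theorems \ref{thm2} and \ref{thm3}: reduce to a lacunary spectrum, identify the space and the regular functions through Fourier coefficients, and then apply Proposition \ref{main}. After passing to an infinite subset, we may assume that $\Gamma$ is lacunary. We may also drop $0$ from $\Gamma$, and we may assume that $\Gamma$ is sparse enough that $\sum_{\gamma \in \Gamma} a_\gamma < \infty$ for the sequence $a$ chosen below. A closed subspace of $\widetilde{\Lambda}^\alpha_{2\pi,\Gamma'}$ with $\Gamma' \subseteq \Gamma$ is also closed in $\widetilde{\Lambda}^\alpha_{2\pi,\Gamma}$, so this reduction costs nothing.

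For (i), take $a_n = |n|^{-\alpha}\log(e+|n|)$. By Proposition \ref{lac-Holder}(i)(a)$\Leftrightarrow$(c), for every $\beta<\alpha$ we have $\Lambda^\beta_{2\pi,\Gamma} = \mathcal{F}\ell^\infty_{(|n|^{-\beta}),\Gamma}$ as sets. Since $a_n = O(|n|^{-\beta})$ for every $\beta<\alpha$, this gives $\mathcal{F}\ell^\infty_{a,\Gamma} \subseteq \widetilde{\Lambda}^\alpha_{2\pi,\Gamma}$. The inclusion $\widetilde{\Lambda}^\alpha_{2\pi} \subseteq \Lambda^\beta_{2\pi} \subseteq C_{2\pi}$ is continuous, so $E=\widetilde{\Lambda}^\alpha_{2\pi}$ is continuously included in $L^1_{2\pi}$. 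Proposition \ref{main} then shows that $\widetilde{\Lambda}^\alpha_{2\pi,\Gamma} \setminus \mathcal{F}c_{0,a}$ is spaceable in $\widetilde{\Lambda}^\alpha_{2\pi,\Gamma}$. Since $|n|^{-\alpha} = o(a_n)$, we have $\mathcal{F}\ell^\infty_{(|n|^{-\alpha})} \subseteq \mathcal{F}c_{0,a}$, and so the larger set $\widetilde{\Lambda}^\alpha_{2\pi,\Gamma} \setminus \mathcal{F}\ell^\infty_{(|n|^{-\alpha})}$ is also spaceable. Finally, Proposition \ref{lac-Holder}(i)(b)$\Leftrightarrow$(c) says that an $f$ with lacunary spectrum lies in $\Lambda^\alpha(t)$ for some $t$ if and only if $f \in \mathcal{F}\ell^\infty_{(|n|^{-\alpha})}$. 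Hence this set equals the set in (i).

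For (ii), take $a_n = |n|^{-1}\log(e+|n|)$. As before, $\mathcal{F}\ell^\infty_{a,\Gamma}\subseteq \Lambda^\beta_{2\pi,\Gamma}$ for every $\beta<1$, so $\mathcal{F}\ell^\infty_{a,\Gamma}\subseteq\widetilde{\Lambda}^1_{2\pi,\Gamma}$, and Proposition \ref{main} gives spaceability of $\widetilde{\Lambda}^1_{2\pi,\Gamma}\setminus\mathcal{F}\ell^\infty_{(|n|^{-1})}$. By Proposition \ref{lac-Holder}(ii), if $f\in\operatorname{Lip}(t)$ for some $t$, then $f\in\mathcal{F}\ell^\infty_{(|n|^{-1})}$. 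So this set is contained in the set in (ii), and spaceability passes to the larger set. Here we only need the one-sided implication, and Proposition \ref{lac-Holder}(ii) provides exactly that.

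The main point to check is the inclusion $\mathcal{F}\ell^\infty_{a,\Gamma}\subseteq \widetilde{\Lambda}^\alpha_{2\pi,\Gamma}$ in the Fréchet setting. It relies on Proposition \ref{lac-Holder} applied for each $\beta<\alpha$. Proposition \ref{lac-Holder} is stated for $f \in L^1_{2\pi,\Gamma}$, and every element of $\mathcal{F}\ell^\infty_{a,\Gamma}$ lies there, so it applies for every $\beta$. Everything else is bookkeeping of the growth rates: $|n|^{-\alpha} = o(a_n)$ and $a_n = O(|n|^{-\beta})$ for all $\beta<\alpha$, together with the summability $\sum_{\gamma\in\Gamma} a_\gamma<\infty$ on the thinned lacunary set.
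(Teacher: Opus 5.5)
Your proof is correct and follows essentially the same route as the paper. You reduce to a lacunary $\Gamma$, use Proposition \ref{lac-Holder} to see that $\mathcal{F}\ell^\infty_{a,\Gamma}\subseteq\widetilde{\Lambda}^\alpha_{2\pi,\Gamma}$ for $a_n=|n|^{-\alpha}\log(e+|n|)$, apply Proposition \ref{main}, and pass back via Proposition \ref{lac-Holder}; the paper does the same, treating (i) and (ii) simultaneously with $\alpha\in(0,1]$. Your extra thinning to get $\sum_{\gamma\in\Gamma}a_\gamma<\infty$ is harmless but unnecessary here, since it only served in Theorem \ref{thm2} to land in $C_{2\pi}$, and it holds automatically for lacunary $\Gamma$ anyway.
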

\begin{proof}
 Let $\alpha \in (0,1]$ (we prove (i) and (ii) simultaneously). We may assume that  $\Gamma$ is lacunary.  Proposition \ref{lac-Holder} yields that 
$$ \widetilde{\Lambda}^\alpha_{2\pi,\Gamma}  = \bigcap_{\beta \in (0,\alpha)}\mathcal{F}\ell^\infty_{(|n|^{-\beta}),\Gamma}.
$$ 
Define $a = (|n|^{-\alpha}\log(e+|n|))$. Since $a_n = O(|n|^{-\beta})$ and $|n|^{-\alpha}= o(a_n)$ as $|n| \to \infty$ for all $\beta \in (0,\alpha)$, we obtain that
$$
 \mathcal{F}\ell^\infty_{(|n|^{-\alpha})} \subseteq \mathcal{F}c_{0,a} \subseteq \mathcal{F}\ell^\infty_{a} \subseteq \bigcap_{\beta \in (0,\alpha)} \mathcal{F}\ell^\infty_{(|n|^{-\beta})}.
$$
The remainder of the proof is similar to the one of  Theorem \ref{thm2}.
\end{proof}

For $\alpha \in (0,1]$ we define the Fr\'echet space
$$
\widetilde{\Lambda}^\alpha(\D) = \varprojlim_{\beta \to \alpha-} \Lambda^\beta(\D).
$$
\begin{corollary}\label{cor3}  Let $\Gamma \subseteq \Z$ be infinite.
\begin{itemize}
\item[(i)] Let $\alpha \in (0,1)$. The set
$$
\{f \in \widetilde{\Lambda}^\alpha(\D)  \mid f_{\mid \T} \notin  \Lambda^\alpha(z) \mbox{ for all } z \in \T\}
$$
is spaceable in $\widetilde{\Lambda}^\alpha(\D)$. %(and thus also in $\widetilde{\Lambda}^\alpha_{2\pi}$).
\item[(ii)] The set 
\begin{equation}
\label{set3}
\{f \in \widetilde{\Lambda}^1(\D) \mid f_{\mid \T} \notin  \operatorname{Lip}(z) \mbox{ for all } z \in \T\}
\end{equation}
is spaceable in $\widetilde{\Lambda}^1(\D)$. %(and thus also in $\widetilde{\Lambda}^1_{2\pi}$).
\end{itemize}
\end{corollary}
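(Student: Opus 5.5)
The plan is to transfer Theorem \ref{thm3bis} with $\Gamma=\N$ to the disk, as was done for the earlier corollaries. The key tool is Proposition \ref{disk-2}(i): for every $\beta\in(0,1)$ the map $R\colon f\mapsto \widetilde{f_{\mid\T}}$ is a topological isomorphism $\Lambda^\beta(\D)\to\Lambda^\beta_{2\pi,\N}$.

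First I would identify the two projective limits. Note that $\widetilde{\Lambda}^\alpha(\D)=\bigcap_{\beta<\alpha}\Lambda^\beta(\D)$, with the Fr\'echet topology generated by the norms $\|\cdot\|_{\Lambda^\beta(\D)}$ for $\beta<\alpha$. It suffices to take $\beta$ along a sequence increasing to $\alpha$, because the inclusions $\Lambda^{\beta'}\subseteq\Lambda^{\beta}$ for $\beta<\beta'$ are continuous, on the disk as well as on the circle. Since $R$ is the same map for every $\beta$ and is an isomorphism at each level, it is a bijection
$$
\widetilde{\Lambda}^\alpha(\D)\to\bigcap_{\beta<\alpha}\Lambda^\beta_{2\pi,\N}=\widetilde{\Lambda}^\alpha_{2\pi,\N}.
$$
Moreover, for each $\beta$ the norm $\|Rf\|_{\Lambda^\beta_{2\pi}}$ is equivalent to $\|f\|_{\Lambda^\beta(\D)}$. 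Hence $R$ is a topological isomorphism of these Fr\'echet spaces. The one point to check is that $\widetilde{\Lambda}^\alpha_{2\pi,\N}$, understood as the spectrum-restricted subspace of $\widetilde{\Lambda}^\alpha_{2\pi}$, equals $\bigcap_\beta\Lambda^\beta_{2\pi,\N}$ with the induced topology. This is immediate, since the spectrum condition does not depend on $\beta$.

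Next, by Remark \ref{rem-point}, for $f\in A(\D)$ and $t\in\R$ we have $f_{\mid\T}\in\Lambda^\alpha(e^{it})$ if and only if $\widetilde{f_{\mid \T}}\in\Lambda^\alpha(t)$, and the same holds for $\operatorname{Lip}$. Since $t\mapsto e^{it}$ maps $\R$ onto $\T$, the isomorphism $R$ carries the set in (i), respectively (ii), exactly onto the set in Theorem \ref{thm3bis}(i), respectively (ii), with $\Gamma=\N$.

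Finally, a topological isomorphism maps closed infinite-dimensional subspaces to closed infinite-dimensional subspaces. So $R^{-1}$ applied to the subspace provided by Theorem \ref{thm3bis} gives the required subspace of $\widetilde{\Lambda}^\alpha(\D)$, contained in the set together with $\{0\}$. The only mildly delicate step is the identification of the Fr\'echet topologies in the first step; the rest is formal. The hypothesis ``$\Gamma\subseteq\Z$ infinite'' in the statement plays no role, since we simply specialize to $\Gamma=\N$.
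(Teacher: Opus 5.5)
Your proposal is correct and is the paper's own argument: the paper's proof just cites Proposition \ref{disk-2}, Remark \ref{rem-point}, and Theorem \ref{thm3bis} with $\Gamma=\N$, and you have spelled out the routine level-by-level identification $\widetilde{\Lambda}^\alpha(\D)\cong\widetilde{\Lambda}^\alpha_{2\pi,\N}$ that the paper leaves implicit. You are also right that the hypothesis on $\Gamma$ in the statement plays no role.
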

\begin{proof}
This follows from Proposition \ref{disk-2}, Remark \ref{rem-point}, and Theorem \ref{thm3bis} with $\Gamma = \N$.
\end{proof}

\begin{remark}
The second part of Corollary \ref{cor3} addresses question (b) on \cite[p.\ 15]{BG0}, which asked what lineability properties the set in \eqref{set3} enjoys.
\end{remark}
\subsection{Ultradifferentiable classes} We start with the following technical lemma.
\begin{lemma}\label{ws}
Let $M$ be a weight sequence. For every $L >1$ it holds that $\omega_M(L\rho) - \omega_M(\rho) \to \infty$ as $\rho \to \infty$.
\end{lemma}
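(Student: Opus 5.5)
The idea is to compare the two suprema defining $\omega_M(L\rho)$ and $\omega_M(\rho)$ through a single index $p$ at which the supremum for $\omega_M(\rho)$ is attained. Write $\omega_M(\rho) = \sup_p \log(M_0\rho^p/M_p)$. The hypothesis $M_p^{1/p}\to\infty$ gives $\rho^p/M_p \to 0$ as $p\to\infty$ for each fixed $\rho>0$, so the supremum is attained at some index $p(\rho)\in\N$, chosen for instance as the largest maximizer. Then
$$
\omega_M(L\rho) \geq \log\frac{M_0 (L\rho)^{p(\rho)}}{M_{p(\rho)}} = p(\rho)\log L + \omega_M(\rho),
$$
and hence $\omega_M(L\rho)-\omega_M(\rho) \geq p(\rho)\log L$. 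Since $L>1$, it suffices to show that $p(\rho)\to\infty$ as $\rho\to\infty$.

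For this step I argue by contradiction. If $p(\rho)$ does not tend to infinity, there are $P\in\N$ and a sequence $\rho_k\to\infty$ with $p(\rho_k)\leq P$. Along this sequence,
$$
\omega_M(\rho_k)=\max_{p\leq P}\log\frac{M_0\rho_k^p}{M_p} \leq P\log\rho_k + C
$$
for a constant $C$ depending only on $M_0,\dots,M_P$. This contradicts $\log t = o(\omega_M(t))$, which the paper states just after defining $\omega_M$. More directly, for any $q>P$ we have $\omega_M(\rho)\geq \log(M_0\rho^q/M_q)$, which eventually exceeds $P\log\rho + C$.

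Alternatively, one can use log-convexity $(M.1)$: the quotients $m_p = M_p/M_{p-1}$ are nondecreasing and tend to infinity, and $p(\rho)$ is essentially $\#\{p : m_p\leq\rho\}$, which is clearly unbounded. The contradiction argument above is simpler, though, and does not even need $(M.1)$ or $(M.2)'$.

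The only mildly delicate point is the existence of a maximizing index, which requires $\rho^p/M_p\to 0$. This follows from $M_p^{1/p}\to\infty$: eventually $M_p^{1/p}>2\rho$, so $\rho^p/M_p<2^{-p}$. With that in place the argument is routine, and the main step is the growth $p(\rho)\to\infty$.
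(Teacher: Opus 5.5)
Your proof is correct, and it reaches the same bound as the paper by a more elementary route.

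The paper uses Komatsu's integral representation $\omega_M(\rho)=\int_0^\rho m(\lambda)\lambda^{-1}\,{\rm d}\lambda$, where $m(\lambda)=\#\{p\geq 1 : M_p/M_{p-1}\leq\lambda\}$ is the counting function. This representation relies on log-convexity $(M.1)$. The paper then bounds $\int_\rho^{L\rho}m(\lambda)\lambda^{-1}\,{\rm d}\lambda\geq m(\rho)\log L$, using that $m$ is nondecreasing, and $m(\rho)\to\infty$ is immediate.

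You instead insert a maximizing index for $\omega_M(\rho)$ into the supremum defining $\omega_M(L\rho)$. This gives $\omega_M(L\rho)-\omega_M(\rho)\geq p(\rho)\log L$, and you prove $p(\rho)\to\infty$ by comparing with a term of higher index.

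Under $(M.1)$, your largest maximizer $p(\rho)$ coincides exactly with $m(\rho)$. Indeed, $\log(M_0\rho^p/M_p)=\sum_{j\leq p}\log(\rho/m_j)$ with $m_j$ nondecreasing, so the sum is maximized by taking all $j$ with $m_j\leq\rho$. The two lower bounds are therefore literally the same, as your alternative remark anticipates.

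The paper's route is shorter, given the cited formula, and yields an exact expression for the difference. Yours is self-contained and more general: it uses only $M_p^{1/p}\to\infty$ and needs neither $(M.1)$ nor $(M.2)'$.
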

\begin{proof}
Define  $m_p = M_p/M_{p-1}$, $p \geq 1$, and the counting function $m(\lambda) = \sum_{m_p \leq \lambda} 1$, $\lambda \geq 0$. By \cite[Equation (3.11)]{Komatsu1}, we have the following representation of $\omega_M$:
$$
\omega_M(\rho) = \int_0^\rho \frac{m(\lambda)}{\lambda} {\rm d} \lambda, \qquad \rho \geq 0.
$$
For $L >1$ we obtain that
$$
\omega_M(L\rho) - \omega_M(\rho) = \int_{\rho}^{L\rho} \frac{m(\lambda)}{\lambda} {\rm d} \lambda \geq m(\rho)\log L,
$$
from which the result follows.
\end{proof}
Given a weight function $M$, a function $f: \R \to \C$ is said to be \emph{nowhere of class $\mathcal{E}^{[M]}$} if $ f_{\mid I} \notin  \mathcal{E}^{[M]}(I)$ for all non-empty open intervals $I \subseteq \R$. %Thus, $f$ is nowhere real analytic if $f$ is nowhere of class $\mathcal{E}^{\{p!\}}$.
\begin{theorem}\label{thmu1}  Let $M$ be a weight sequence and $\Gamma \subseteq \Z$ be infinite. The set 
\begin{equation}
\label{set4}
\{f \in C^\infty_{2\pi,\Gamma} \mid f \mbox{ is nowhere of class } \mathcal{E}^{\{M\}}\}
\end{equation}
is spaceable in $C^\infty_{2\pi,\Gamma}$. % (and thus also in $C^\infty_{2\pi}$).
\end{theorem}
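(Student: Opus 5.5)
We follow the pattern of Theorems \ref{thm2}--\ref{thm3bis}: reduce to a lacunary spectrum, then apply Proposition \ref{main} with a sequence $a$ sitting strictly between the Fourier decay of $C^\infty_{2\pi}$ and that of $\mathcal{E}^{\{M\}}_{2\pi}$. First we replace $\Gamma$ by an infinite lacunary subset; this is harmless because $C^\infty_{2\pi,\Gamma'}$ is a closed subspace of $C^\infty_{2\pi,\Gamma}$ for $\Gamma' \subseteq \Gamma$. By Proposition \ref{lacultra}, a function $f \in C_{2\pi,\Gamma}$ that is of class $\mathcal{E}^{\{M\}}$ on some non-empty open interval already belongs to $\mathcal{E}^{\{M\}}_{2\pi}$. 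By Proposition \ref{FCultra}, this space equals $\mathcal{F}\ell^\infty_{\{\omega_M\}}$. Hence the set \eqref{set4} contains
$$
C^\infty_{2\pi,\Gamma} \setminus \bigcup_{h>0} \mathcal{F}\ell^\infty_{(e^{-\omega_M(|n|/h)})}.
$$
It therefore suffices to find a sequence $a$ with the following three properties:
\begin{itemize}
\item[(1)] $\mathcal{F}\ell^\infty_{a} \subseteq C^\infty_{2\pi}$, so that $\mathcal{F}\ell^\infty_{a,\Gamma} \subseteq C^\infty_{2\pi,\Gamma}$;
\item[(2)] $e^{-\omega_M(|n|/h)} = o(a_n)$ for every $h>0$, so that $\mathcal{F}\ell^\infty_{\{\omega_M\}} \subseteq \mathcal{F}c_{0,a}$;
\item[(3)] $a_n \to 0$ and $a_n>0$.
\end{itemize}
Proposition \ref{main} (with $E = C^\infty_{2\pi}$, which is continuously included in $L^1_{2\pi}$) then yields spaceability.

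The main obstacle is constructing $a$. By Proposition \ref{prop-FS}, (1) means $a_n = O(|n|^{-k})$ for every $k$, i.e.\ $-\log a_n$ must grow faster than any multiple of $\log|n|$. Condition (2) means $-\log a_n - \omega_M(|n|/h) \to -\infty$ for each $h$. So we need a function $\sigma(\rho)$ with $\log\rho = o(\sigma(\rho))$ and $\sigma(\rho) \le \omega_M(\rho/h) - c_h(\rho)$ with $c_h(\rho)\to\infty$, for every $h$.

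We would take $\sigma$ by a diagonal construction. Since $\log t = o(\omega_M(t))$ for fixed $h$, and Lemma \ref{ws} gives $\omega_M(\rho/h) - \omega_M(\rho/(2h)) \to \infty$, we choose increasing radii $R_1<R_2<\cdots$ such that for $\rho \ge R_j$:
$$
\omega_M(\rho/(2j)) \ge j\log\rho \quad\text{and}\quad \omega_M(\rho/j) - \omega_M(\rho/(2j)) \ge j.
$$
Set $\sigma(\rho) = \omega_M(\rho/(2j))$ for $R_j \le \rho < R_{j+1}$, and define $a_n = e^{-\sigma(|n|)}$, with $a_n=1$ for small $|n|$. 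The checks are then as follows:
\begin{itemize}
\item For (1): on $[R_j,R_{j+1})$ we have $\sigma(\rho)\ge j\log\rho$, so for each fixed $k$ we get $\sigma(\rho) \ge k\log\rho$ once $\rho \ge R_k$. (The defining inequality at stage $j$ holds throughout $[R_j,\infty)$, and we only need it on $[R_j,R_{j+1})$.)
\item For (2): fix $h$ and take $\rho \ge R_j$ with $j \ge h$. Since $\omega_M$ is increasing, $\omega_M(\rho/h) \ge \omega_M(\rho/j) \ge \sigma(\rho) + j$. Here $j=j(\rho)\to\infty$, so $e^{-\omega_M(|n|/h)}/a_n \le e^{-j(|n|)} \to 0$.
\item For (3): this is immediate, since $\sigma(\rho) \to \infty$.
\end{itemize}
Log-convexity and monotonicity of $\omega_M$ beyond what is used above are not needed.

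With $a$ in hand, the argument concludes exactly as in Theorem \ref{thm2}. Proposition \ref{main} shows that $C^\infty_{2\pi,\Gamma}\setminus\mathcal{F}c_{0,a}$ is spaceable. This set is contained in $C^\infty_{2\pi,\Gamma}\setminus\mathcal{F}\ell^\infty_{\{\omega_M\}}$, which by Propositions \ref{lacultra} and \ref{FCultra} consists of functions that are nowhere of class $\mathcal{E}^{\{M\}}$.
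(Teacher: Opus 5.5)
Your proof is correct and is essentially the paper's argument: reduce to a lacunary $\Gamma$, apply Proposition \ref{main} with $E=C^\infty_{2\pi}$ and an intermediate sequence $a$, and conclude via Propositions \ref{lacultra} and \ref{FCultra}. The only difference is how $a$ is built: you use a diagonal construction (where you should also require $R_j\to\infty$), while the paper simply takes $a_n=e^{-\omega_M(\sqrt{|n|})}$, for which $a_n=O(|n|^{-k})$ follows from $\log t=o(\omega_M(t))$, and $e^{-\omega_M(|n|/h)}=o(a_n)$ follows from Lemma \ref{ws} because $|n|/h\ge 2\sqrt{|n|}$ for large $|n|$.
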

\begin{proof}
 We may assume that $\Gamma$ is lacunary. Proposition \ref{prop-FS} yields that 
 $$
 C^\infty_{2\pi} = \bigcap_{k >0}\mathcal{F}\ell^\infty_{(|n|^{-k})}.
 $$ 
 Define $a = (e^{-\omega_M\left(\sqrt{|n|}\right)})$. Note that $a_n = O(|n|^{-k})$  as $|n| \to \infty$ for all $k >0$ and, by Lemma \ref{ws}, $e^{-\omega_M(|n|/h)} = o(a_n)$   as $|n| \to \infty$ for all $h >0$. Consequently,
$$
 \mathcal{F}\ell^\infty_{\{\omega_M\}} \subseteq \mathcal{F}c_{0,a} \subseteq \mathcal{F}\ell^\infty_{a}  \subseteq  \bigcap_{k >0}\mathcal{F}\ell^\infty_{(|n|^{-k})}.
$$
By Proposition \ref{main}, $C^\infty_{2\pi,\Gamma} \backslash  \mathcal{F}c_{0,a}$ is spaceable in $C^\infty_{2\pi,\Gamma}$. Hence, also the bigger set $C^\infty_{2\pi,\Gamma} \backslash \mathcal{F}\ell^\infty_{\{\omega_M\}}$
is spaceable in $C^\infty_{2\pi,\Gamma}$. Propositions \ref{FCultra} and \ref{lacultra} imply  that the latter set coincides with the one in \eqref{set4}.
\end{proof}

\begin{corollary}\label{cor-smooth} The set 
$$
\{f \in A^\infty(\D) \mid \widetilde{f_{\mid \T}} \mbox{ is nowhere real analytic on } \R \}
$$
is spaceable in  $A^\infty(\D)$.
\end{corollary}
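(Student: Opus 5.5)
This is a direct application of Theorem \ref{thmu1} with $\Gamma = \N$ and $M = (p!)$, transported to $A^\infty(\D)$ via Proposition \ref{disk-3}.

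First, I check that $M_p = p!$ is a weight sequence in the sense of this paper. We have $(p!)^{1/p} \to \infty$. Log-convexity $(M.1)$ reads $p!^2 \leq (p-1)!(p+1)!$, i.e.\ $p \leq p+1$. For $(M.2)'$ we need $(p+1)! \leq H^{p+1} p!$, i.e.\ $p+1 \leq H^{p+1}$, which holds with $H = 2$.

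Moreover, $\mathcal{E}^{\{p!\}}(I) = \mathcal{A}(I)$, so "nowhere of class $\mathcal{E}^{\{M\}}$" is exactly "nowhere real analytic on $\R$". Theorem \ref{thmu1} with $\Gamma = \N$, which is infinite, then yields a closed infinite-dimensional subspace $X \subseteq C^\infty_{2\pi,\N}$. Every nonzero element of $X$ is nowhere real analytic.

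Next, let $T \colon A^\infty(\D) \to C^\infty_{2\pi,\N}$, $f \mapsto \widetilde{f_{\mid \T}}$, be the topological isomorphism of Proposition \ref{disk-3}. Set $Y = T^{-1}(X)$. Then $Y$ is a closed subspace of $A^\infty(\D)$, because $T$ is continuous. It is infinite-dimensional, because $T$ is a linear bijection.

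For $0 \neq f \in Y$, injectivity of $T$ gives $Tf \neq 0$, and $Tf \in X$. Hence $\widetilde{f_{\mid \T}} = Tf$ is nowhere real analytic on $\R$. Thus $Y \setminus \{0\}$ is contained in the set in question, which proves spaceability.

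There is no real obstacle here. The only points needing care are verifying the weight sequence axioms for $(p!)$, including $(M.2)'$, and using the identification $\mathcal{E}^{\{p!\}} = \mathcal{A}$ stated earlier in the paper.
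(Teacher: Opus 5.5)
Your proposal is correct and matches the paper's proof: you apply Theorem \ref{thmu1} with $M = (p!)$ and $\Gamma = \N$ and transport the resulting subspace to $A^\infty(\D)$ via the isomorphism of Proposition \ref{disk-3}. Your explicit checks that $(p!)$ satisfies $(M.1)$ and $(M.2)'$ spell out details the paper leaves implicit.
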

\begin{proof}
This follows from Proposition \ref{disk-3} and Theorem \ref{thmu1} with $M = (p!)$ and $\Gamma = \N$. 
\end{proof}

\begin{remark}
Corollary \ref{cor-smooth} sheds some light on the question on  \cite[p.\ 607]{BG1},  which asked whether the smaller set
$$
\{f \in A^\infty(\D) \mid \widetilde{f_{\mid \T}} \mbox{ is Pringsheim singular at every point  of }  \R \}
$$
is spaceable in $A^\infty(\D)$. 
\end{remark}

%\begin{theorem}  Let $M$ be a weight sequence and $\Gamma \subseteq \Z$ be infinite. The set 
%$$
%\{f \in   \mathcal{E}^{\{M\}}_{2\pi, \Gamma} \mid f \mbox{ is nowhere of class } \mathcal{E}^{(M)}\}
%$$
%is spaceable in $\mathcal{E}^{\{M\}}_{2\pi, \Gamma}$ (and thus also in $\mathcal{E}^{\{M\}}_{2\pi}$).
%\end{theorem}
%\begin{proof}
%We may assume that $\Gamma$ is lacunary. Define $a = (e^{-\omega_M(|n|)})$. Proposition \ref{FCultra} yields that 
% $$
% \mathcal{E}^{\{M\}}_{2\pi} = \bigcup_{h >0} \mathcal{F}\ell^\infty_{(e^{-\omega_M(|n|/h)})}.
% $$ 
%By Lemma \ref{ws}, $e^{-\omega_M(|n|/h)} = o(a_n)$   as $|n| \to \infty$ for all $h < 1$. Consequently,
%$$
%\bigcap_{h >0} \mathcal{F}\ell^\infty_{(e^{-\omega_M(|n|/h)})} \subseteq \mathcal{F}c_{0,a} \subseteq \mathcal{F}\ell^\infty_{a}  \subseteq  \bigcup_{h >0} \mathcal{F}\ell^\infty_{(e^{-\omega_M(|n|/h)})} .
%$$
%The remainder of the proof is similar to the one of  Theorem \ref{thmu1}.
%\end{proof}

\begin{theorem} \label{themu3}  Let $M$ and $N$ be two weight sequences with $M \prec N$ and $\Gamma \subseteq \Z$ be infinite. The set 
$$
\{f \in   \mathcal{E}^{(N)}_{2\pi, \Gamma} \mid f \mbox{ is nowhere of class }  \mathcal{E}^{\{M\}}\}
$$
is spaceable in $\mathcal{E}^{(N)}_{2\pi, \Gamma}$.
\end{theorem}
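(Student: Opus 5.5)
The plan is to follow the proof of Theorem \ref{thmu1}. I will apply Proposition \ref{main} with $E=\mathcal{E}^{(N)}_{2\pi}$, which by Proposition \ref{FCultra} equals $\mathcal{F}\ell^\infty_{(\omega_N)}$, and with a sequence $a$ for which
$$
\mathcal{F}\ell^\infty_{\{\omega_M\}}\subseteq \mathcal{F}c_{0,a}, \qquad \mathcal{F}\ell^\infty_{a,\Gamma}\subseteq \mathcal{F}\ell^\infty_{(\omega_N),\Gamma}.
$$
Proposition \ref{main} then makes $\mathcal{E}^{(N)}_{2\pi,\Gamma}\backslash\mathcal{F}c_{0,a}$ spaceable, hence also the bigger set $\mathcal{E}^{(N)}_{2\pi,\Gamma}\backslash \mathcal{F}\ell^\infty_{\{\omega_M\}}$. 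When $\Gamma$ is lacunary, Propositions \ref{FCultra} and \ref{lacultra} identify this set with the set in the statement. As before, we may shrink $\Gamma$, and this time we use the freedom to thin it out substantially.

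\textbf{Comparing $\omega_N$ and $\omega_M$.} The relation $M\prec N$ gives, for every $k>0$, a constant $C_k$ with $M_p\le C_k k^pN_p$. Inserting this into the definition of the associated function yields
$$
\omega_N(\rho)\le \omega_M(k\rho)+c_k, \qquad c_k=\log(C_kN_0/M_0), \quad \rho \geq 0.
$$
With $k=1/j^2$ this becomes $\omega_N(j\rho)\le\omega_M(\rho/j)+d_j$ for every $j\ge1$, with suitable constants $d_j$.

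\textbf{Choosing $\Gamma$ and $a$ (the main difficulty).} A single explicit weight sandwiched between every $e^{-\omega_N(|n|/h)}$ and every $e^{-\omega_M(|n|/h)}$ is awkward to write down, because the constants $c_k$ are uncontrolled. I avoid this by a diagonal choice along a sparse subset of $\Gamma$; this is legitimate since only the values $a_\gamma$, $\gamma\in\Gamma$, enter the argument.

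\emph{Construction.} Pick $\gamma_1<\gamma_2<\dots$ in $\Gamma$ (all positive or all negative, depending on which part of $\Gamma$ is infinite). They are chosen so that $|\gamma_{j+1}|\ge 2|\gamma_j|$, so that $\sum_j e^{-\omega_N(j|\gamma_j|)}<\infty$, and so that
$$
\omega_M\bigl(2|\gamma_j|/j\bigr)-\omega_M\bigl(|\gamma_j|/j\bigr)\ge d_j+j.
$$
This last requirement can be met by Lemma \ref{ws} applied with $L=2$ and $\rho=|\gamma_j|/j\to\infty$. Replace $\Gamma$ by the lacunary set $\{\gamma_j\}$. Set $a_{\gamma_j}=e^{-\omega_N(j|\gamma_j|)}$, and let $a_n$ off $\Gamma$ be any positive null sequence, for instance $e^{-\omega_N(|n|)}$.

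\emph{Upper inclusion.} Fix $h>0$. For $j\ge1/h$ we have $\omega_N(|\gamma_j|/h)\le\omega_N(j|\gamma_j|)$, because $\omega_N$ is increasing. Hence $a_{\gamma_j}=O(e^{-\omega_N(|\gamma_j|/h)})$, which gives $\mathcal{F}\ell^\infty_{a,\Gamma}\subseteq\mathcal{F}\ell^\infty_{(\omega_N),\Gamma}=\mathcal{E}^{(N)}_{2\pi,\Gamma}$.

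\emph{Lower inclusion.} For fixed $h$ and all $j\ge 2h$,
$$
\omega_M(|\gamma_j|/h)-\omega_N(j|\gamma_j|)\ge \omega_M(2|\gamma_j|/j)-\omega_M(|\gamma_j|/j)-d_j\ge j\to\infty.
$$
So $e^{-\omega_M(|\gamma|/h)}=o(a_\gamma)$ along $\Gamma$, and therefore $\mathcal{F}\ell^\infty_{\{\omega_M\},\Gamma}\subseteq\mathcal{F}c_{0,a}$.

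\textbf{Conclusion.} Proposition \ref{main} applies, since $\mathcal{F}\ell^\infty_{a,\Gamma}\subseteq E_\Gamma$. Its proof uses only the values of $a$ on $\Gamma$, together with summability along the chosen subsequences, which we have arranged. The identification with the set of nowhere-$\mathcal{E}^{\{M\}}$ functions follows from Propositions \ref{FCultra} and \ref{lacultra}, exactly as in Theorem \ref{thmu1}.

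The only genuinely new step is the diagonal choice of $\gamma_j$. The key check there is that the first-order gap supplied by Lemma \ref{ws} can absorb the uncontrolled constants $d_j$ uniformly in $h$. This works because $\omega_M$ is monotone, so the estimate at scale $h$ reduces to the single scale $j/2$.
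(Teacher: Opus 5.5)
Your proof is correct, but it takes a different route from the paper's.

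The paper does not thin out $\Gamma$ beyond lacunarity. Instead it introduces the intermediate sequence $Q=(\sqrt{M_pN_p})$, notes that $Q$ is again a weight sequence with $M\prec Q\prec N$, and uses (the easy direction of) Komatsu's Lemma 3.10. This gives, for every $h>0$, $\omega_Q(\rho)\le\omega_M(\rho/h)+C_1$ and $\omega_N(\rho/h)\le\omega_Q(\rho)+C_2$. With the single weight $a_n=e^{-\omega_Q(|n|)}$, Lemma \ref{ws} then yields $\mathcal{F}\ell^\infty_{\{\omega_M\}}\subseteq\mathcal{F}c_{0,a}\subseteq\mathcal{F}\ell^\infty_a\subseteq\mathcal{F}\ell^\infty_{(\omega_N)}$ on all of $\Z$, just as in Theorem \ref{thmu1}.

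So your remark that a single explicit weight is awkward to write down is not accurate. The point of $Q$ is that the middle weight sits at a fixed scale, so each inclusion only involves constants depending on the fixed $h$. Your weight $e^{-\omega_N(j|\gamma_j|)}$ instead has to move to scale $j$ to beat every $h$ in the Beurling intersection. That is what produces the $j$-dependent constants $d_j$ and forces the diagonal thinning.

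What your version buys is that you never construct or verify an intermediate weight sequence. You only use the one-line estimate $\omega_N(\rho)\le\omega_M(k\rho)+c_k$ read off from $M\prec N$, together with Lemma \ref{ws} for $\omega_M$. The price is a non-canonical, $\Gamma$-dependent $a$ whose inclusions hold only along the thinned spectrum. That is all Proposition \ref{main} needs.

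Two cosmetic points:
\begin{itemize}
\item For negative frequencies you mean $|\gamma_1|<|\gamma_2|<\cdots$, with $\gamma_1\neq 0$ for lacunarity.
\item The requirement $\sum_j e^{-\omega_N(j|\gamma_j|)}<\infty$ is superfluous, since Proposition \ref{main} only needs $a_n\to 0$.
\end{itemize}
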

\begin{proof}
We may assume that $\Gamma$ is lacunary. Define $Q = (\sqrt{M_pN_p})$. Then, $Q$ is a weight sequence satisfying $M \prec Q \prec N$.  By \cite[Lemma 3.10]{Komatsu1}, for every $h > 0$ there are $C_1,C_2 >0$ such that
\begin{equation}
\label{wsmiddle}
\omega_Q(\rho) \leq \omega_M(\rho/h) +  C_1 \quad \mbox{and} \quad \omega_N(\rho/h) \leq \omega_Q(\rho) +  C_2, \qquad \forall \rho \geq 0. 
\end{equation}
 Proposition \ref{FCultra} yields that $\mathcal{E}^{(N)}_{2\pi}  =  \mathcal{F}\ell^\infty_{(\omega_N)}$.
 %$$
 %\mathcal{E}^{(N)}_{2\pi} = \bigcap_{h >0} \mathcal{F}\ell^\infty_{(e^{-\omega_N(|n|/h)})}.
 %$$ 
 Define $a = (e^{-\omega_Q(|n|)})$.  By Lemma \ref{ws} and \eqref{wsmiddle}, $e^{-\omega_M(|n|/h)} = o(a_n)$ and  $a_n =O(e^{-\omega_N(|n|/h)})$ as $|n| \to \infty$ for all $h >0$. Consequently,
$$
\mathcal{F}\ell^\infty_{\{\omega_M\}} \subseteq \mathcal{F}c_{0,a} \subseteq \mathcal{F}\ell^\infty_{a}  \subseteq    \mathcal{F}\ell^\infty_{(\omega_N)}.
$$
The remainder of the proof is similar to the one of  Theorem \ref{thmu1}.
\end{proof}

\begin{remark}
 Let $M$ and $N$ be two weight sequences with $M \prec N$.  Since $\mathcal{E}^{(N)}_{2\pi}$ is a topological subspace of $\mathcal{E}^{(N)}(\R)$, Theorem \ref{themu3} (with $\Gamma = \Z$) implies that the set
\begin{equation}
\label{setrem}
\{f \in   \mathcal{E}^{(N)}(\R) \mid f \mbox{ is nowhere of class }  \mathcal{E}^{\{M\}}\}
\end{equation}
is spaceable in  $\mathcal{E}^{(N)}(\R)$. This complements \cite[Theorem 2.10]{Esser}, in which it is shown that the set  in \eqref{setrem} is $\mathfrak{c}$-dense-lineable  $\mathcal{E}^{(N)}(\R)$, under the assumption that $M$ is non-quasianalytic.
\end{remark}

\section{Spaceability properties of the set of real analytic functions} 
Let $I \subseteq \R$ be an open interval and write  $C(I)$ for the Fr\'echet space of continuous functions on $I$. This section is devoted to the study of the closed subspaces of $C(I)$ consisting of functions that are real analytic. In particular, we show that the set $\mathcal{A}(I)$ is spaceable in $C(I)$. As mentioned in the introduction, we rely heavily on the work \cite{DL} of  Doma\'nski and Langenbruch.

We denote by $\mathcal{O}(\D)$ the Fr\'echet space of analytic functions on $\D$. The following result is a direct consequence of the proof of \cite[Proposition 5.3]{DL}.

\begin{proposition}\label{compoper} 
Let $I \subseteq \R$ be a non-empty open interval. There exists a real analytic function $\varphi: I \to \D$ such that the composition operator
$$
\mathcal{O}(\D) \to C(I), \, f \mapsto f \circ \varphi, 
$$
is  a topological embedding. %Consequently, $C_\varphi(\mathcal{H}(\D))$ is a  infinite-dimensional closed subspace of $C(I)$ consisting of real analytic functions on $I$. In particular, $\mathcal{A}(I)$ is spaceable in $C(I)$.
\end{proposition}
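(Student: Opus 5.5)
The plan is to reduce the topological-embedding property to the maximum modulus principle. We choose $\varphi$ so that $\varphi(I)$ contains, within compact pieces, closed loops encircling larger and larger discs centred at $0$.

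Since any non-empty open interval is real analytically diffeomorphic to $(0,1)$, and composition with such a diffeomorphism is a topological isomorphism of the corresponding $C$-spaces, we may assume $I=(0,1)$. Continuity of $f\mapsto f\circ\varphi$ is automatic for any continuous $\varphi\colon I\to\D$. Indeed, for every compact $J\Subset I$ the set $\varphi(J)$ is a compact subset of $\D$, so $\sup_J|f\circ\varphi|\le \sup_{\varphi(J)}|f|$. Clearly $f\circ\varphi$ is also continuous.

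The real content is the reverse estimate, which gives injectivity and the openness onto the image at once. We want compact intervals $J_k\Subset I$ and radii $r_k\uparrow 1$ such that
$$
\sup_{|z|\le r_k}|f(z)|\le \sup_{t\in J_k}|f(\varphi(t))|,\qquad f\in\mathcal{O}(\D).
$$
Since the seminorms $\sup_{|z|\le r_k}|\cdot|$ generate the topology of $\mathcal{O}(\D)$, this shows that the composition operator is a topological embedding. By the maximum modulus principle, the displayed inequality holds as soon as $\varphi(J_k)$ contains a closed curve $\Gamma_k$ whose interior, meaning the union of the bounded components of $\C\setminus\Gamma_k$, contains the disc $\{|z|\le r_k\}$. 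Equivalently, the polynomially convex hull of $\varphi(J_k)$ should contain that disc.

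So it remains to construct a real analytic $\varphi\colon(0,1)\to\D$ whose image spirals towards $\T$ in such a way that consecutive turns cross each other. This creates closed loops around $0$ whose inner radii tend to $1$. Note that a plain monotone spiral such as $te^{i/(1-t)}$ does not work: a spiral arc does not separate the plane, so the maximum principle gives nothing. I would put $s=1/(1-t)\in(1,\infty)$ and take
$$
\varphi(t)=\rho(s)e^{is},\qquad \rho(s)=\Bigl(1-\frac1s\Bigr)\Bigl(1+\frac{\cos(s/3)}{2s}\Bigr)\cdot c
$$
with a constant $c<1$ and the parameters adjusted so that $|\varphi|<1$. Other explicit real analytic choices with an oscillating radius would serve equally well. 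The oscillation of the radius, with period not commensurate to one full turn, forces a turn with increasing radius to intersect the previous turn. From the two intersecting arcs one extracts a closed curve $\Gamma_k\subseteq\varphi([a_k,b_k])$ winding once around $0$, with $\min_{\Gamma_k}|z|\to 1$. Real analyticity of $\varphi$ on $(0,1)$ is clear from the formula.

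The main obstacle is exactly this geometric verification: the curve must genuinely produce enclosing loops, through a self-intersection or a winding-number argument, with inner radius tending to $1$. I would first try the winding-number formulation. If $\varphi(a_k)=\varphi(b_k)$ and the argument of $\varphi$ increases by $2\pi$ over $[a_k,b_k]$, then $\varphi|_{[a_k,b_k]}$ is a closed curve of winding number $1$ about every point with $|z|<\min_{[a_k,b_k]}|\varphi|$. The argument principle, or equivalently the maximum principle applied on the bounded components of the complement, then yields the required inequality. The points $a_k<b_k$ with $\varphi(a_k)=\varphi(b_k)$ would be found by an intermediate value argument on the radius along the rays $\arg z=\mathrm{const}$.
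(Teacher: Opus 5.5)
Your strategy is the one this proposition rests on, via the proof of \cite[Proposition 5.3]{DL}. Continuity is automatic, and the reverse estimates come from an oscillating spiral that closes up after exactly one turn by the intermediate value theorem, so that the maximum modulus principle applies to a loop of winding number one around discs of radii $r_k\uparrow 1$. The construction behind the proposition takes $\varphi(t)=e^{it}\bigl(1-\frac1t+\frac{2\pi}{t(t+2\pi)}\sin\frac t2\bigr)$ on $(4\pi,\infty)$.

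\textbf{The error.} Your explicit formula, as written, breaks the argument. Put $\rho_0(s)=(1-\frac1s)(1+\frac{\cos(s/3)}{2s})$. Then $\rho_0(s)\le(1-\frac1s)(1+\frac1{2s})=1-\frac1{2s}-\frac1{2s^2}<1$. So with a fixed constant $c<1$ the image of $\varphi$ lies in $\{|z|<c\}$, and your claim $\min_{\Gamma_k}|z|\to 1$ is false. Worse, the operator is then not an embedding. For every compact $J\subseteq I$ you have $\sup_J|f\circ\varphi|\le\sup_{|z|\le c}|f|$. Testing with $f(z)=z^n$ shows that no estimate $\sup_{|z|\le r}|f|\le C\sup_J|f\circ\varphi|$ can hold for $r\in(c,1)$. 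The factor is also superfluous, since $\rho_0<1$ already. Take $c=1$; then $\rho_0(s)\ge(1-\frac1s)(1-\frac1{2s})\to 1$, as you need.

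\textbf{The step you left open.} With $c=1$, your ``main obstacle'' does go through. In the variable $s$, one has $\rho_0(s+2\pi)-\rho_0(s)=-\frac{\sqrt3}{2s}\sin\bigl(\frac s3+\frac\pi3\bigr)+O(s^{-2})$. This is negative at $s=\frac\pi2+6\pi m$ and positive at $s=\frac{7\pi}2+6\pi m$ for large $m$. The intermediate value theorem then gives $s_m\to\infty$ with $\varphi(s_m)=\varphi(s_m+2\pi)$. Since $\arg\varphi(s)=s$, this loop has winding number one about every $z$ with $|z|<\min_{[s_m,s_m+2\pi]}\rho_0$, and that minimum tends to $1$. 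The maximum principle on the bounded components of the loop's complement, which lie in $\D$, gives your reverse estimate.

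\textbf{A correction to your heuristic.} What matters is not incommensurability: $6\pi$ and $2\pi$ are commensurable. What matters is that $\rho_0(s+2\pi)-\rho_0(s)$ changes sign. With $\cos s$ in place of $\cos(s/3)$, one gets $\rho_0(s+2\pi)-\rho_0(s)=\frac{\pi(2-\cos s)}{s(s+2\pi)}+O(s^{-3})>0$ for large $s$, so no loops form. Hence not every oscillating radius ``serves equally well''. The choice $\sin\frac t2$ above flips sign after one turn, which makes the two required sign conditions explicit at $t=4\pi k$ and $t=4\pi k+\pi$.
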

%It suffices to consider the case $I = (4\pi, \infty)$, as there exists a real analytic diffeomorphism between any two non-empty open intervals of $\R$. Define
%$$
%\varphi: (4\pi, \infty) \to \D, \quad \varphi(t) = e^{it} \left ( 1 - \frac{1}{t} + \frac{2\pi}{t(t+2\pi)} \sin(t/2) \right).
%$$
%Clearly $\varphi$ is real analytic on $(4\pi, \infty)$. Note that
%$$
%|\varphi(t)| \leq 1 - \frac{1}{t+2\pi}, \qquad t > 4\pi.
%$$
%%Hence, for all  $x > 4\pi$,
%$$\sup_{t \in (4\pi, x]} |f \circ \varphi(t)| \leq \sup_{z \in \left(1- \frac{1}{x+2\pi}\right) \overline{\D}} |f(z)|, \qquad f \in \mathcal{O}(\D).
%$$
%This shows that $C_\varphi: \mathcal{O}(\D) \to C(I)$ is continuous. Set 
%$$
%r_t = \frac{1}{t} + \frac{2\pi}{t(t+2\pi)}, \qquad t > 4\pi. 
%$$
%Then,
%\begin{equation}
%\label{lowerbound}
%|\varphi(t)| \geq 1 - r_t, \qquad t > 4\pi.
%\end{equation}
%For all $k \in \N$, it holds that
%$$
%|\varphi(4\pi k)| < |\varphi(4\pi k+2\pi)| \qquad \mbox{and} \qquad |\varphi(4\pi k + \pi)| > |\varphi(4\pi k+3\pi)|. 
%$$
%Consequently, there is $t_k \in (4\pi k , 4 \pi k + \pi)$ such that $\gamma = \{ \varphi(t) \mid t \in [t_k, t_k  +2\pi]\}$ is a closed  simple curve. By \eqref{lowerbound} and the maximum modulus principle, we find that 
%$$
%\sup_{z \in (1-r_{4\pi k}) \overline{\D}} |f(z)| \leq \sup_{z \in \gamma} |f(z)| = \sup_{ t \in [t_k, t_k  +2\pi]} |f \circ \varphi (t)|, \qquad f \in \mathcal{O}(\D).
%$$
%This shows that $C_\varphi: \mathcal{O}(\D) \to C(I)$ is a topological embedding and  completes the proof.
Proposition \ref{compoper} implies the following result, which gives a positive answer to  \cite[p.\ 1294, Question 3]{BG}.
\begin{corollary}\label{cor-sa}
Let $I \subseteq \R$ be a non-empty open interval.  The set  $\mathcal{A}(I)$ is spaceable in $C(I)$.
\end{corollary}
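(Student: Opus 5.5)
The plan is to read the corollary off Proposition \ref{compoper} by taking the image of the composition operator. Let $\varphi: I \to \D$ be the real analytic map provided by Proposition \ref{compoper}, and write $C_\varphi: \mathcal{O}(\D) \to C(I)$, $f \mapsto f \circ \varphi$. Set $X = C_\varphi(\mathcal{O}(\D))$. It then suffices to check three things: every element of $X$ is real analytic on $I$, $X$ is infinite-dimensional, and $X$ is closed in $C(I)$.

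First, for $f \in \mathcal{O}(\D)$ the function $f\circ\varphi$ is a composition of a holomorphic function with a real analytic map into its domain. Locally $\varphi$ is the restriction of a holomorphic map on a complex neighbourhood of each point of $I$, and composing with $f$ gives a holomorphic function there, so $f \circ \varphi \in \mathcal{A}(I)$ and $X \subseteq \mathcal{A}(I)$. Second, $C_\varphi$ is injective because it is a topological embedding. Since $\mathcal{O}(\D)$ is infinite-dimensional (the monomials $z^n$ are linearly independent), $X$ is infinite-dimensional as well.

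Third, closedness is the only point needing an argument, and it comes from completeness. $C_\varphi$ is a topological isomorphism of the Fréchet space $\mathcal{O}(\D)$ onto $X$ with the topology induced by $C(I)$, so $X$ is complete, hence closed in the metrizable space $C(I)$. Concretely, if $f_k \circ \varphi \to g$ in $C(I)$, then $(f_k\circ\varphi)$ is Cauchy in $X$. By the embedding property $(f_k)$ is Cauchy in $\mathcal{O}(\D)$ and converges to some $f$. Continuity of $C_\varphi$ gives $g = f\circ\varphi \in X$.

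Thus $X$ is a closed infinite-dimensional subspace of $C(I)$ contained in $\mathcal{A}(I)$, which proves the corollary. There is no real obstacle: all the difficulty sits in Proposition \ref{compoper}, which we are allowed to assume.
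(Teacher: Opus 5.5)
Your proposal is correct and follows essentially the same route as the paper, which obtains the corollary directly from Proposition \ref{compoper} by taking the image $C_\varphi(\mathcal{O}(\D))$ as the required closed infinite-dimensional subspace of real analytic functions. You additionally write out the three verifications (real analyticity of $f\circ\varphi$ via a local holomorphic extension of $\varphi$, infinite-dimensionality via injectivity, and closedness via completeness of the embedded Fr\'echet space) that the paper leaves implicit.
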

In the next remark, we indicate an alternative proof of Corollary \ref{cor-sa} (with $I = (0,1)$) by means of the theory of M\"untz spaces \cite{GL}.  This approach was used by Bernal-Gonz\'alez in \cite[Theorem 4.4]{BG} to show that the set $C^\infty((0,1))$ is spaceable in $C((0,1))$.
\begin{remark}
Let $(\lambda_n)_{n \in \N}$ be an increasing sequence of positive numbers that satisfies $\inf_{n \in \N} \lambda_{n+1} - \lambda_n >0$ and $\sum_{n = 1}^\infty 1/\lambda_n < \infty$. Define
$$
E= \overline{ \operatorname{span} \{ t^{\lambda_n} \mid n \in \N\}}^{C((0,1))}.
$$
Clearly, $E$ is an infinite-dimensional closed subspace of $C((0,1))$. By using the same argument as in \cite[Theorem 6.2.3]{GL}, but using the coefficient bounds from \cite[Corollary 8.4.2]{GL} instead of \cite[Proposition 6.14 and Theorem 6.2.2]{GL},  we see that every function in $E$ is real analytic on $(0,1)$.
%We now show that $E$ in fact consists of real analytic functions on $(0,1)$. This provides an alternative proof of the fact that $\mathcal{A}((0,1))$ is spaceable in $C((0,1))$. Our argument is based on the following result for M\"untz polynomials \cite{}[]: For every $\varepsilon >0$ there are $C >0$ and a compact interval $J \subseteq (0,1)$ such that, for all $\nu \in \N$ and $\alpha_1, \ldots, \alpha_\nu \in \C$,
%\begin{equation}
%\label{inequ}
%|\alpha_n| \leq C(1+\varepsilon)^{\lambda_n} \sup_{t \in J} \left |\sum_{n = 1}^\nu \alpha_n t^{\lambda} \right|, \qquad n= 1, \ldots, \nu.
%\end{equation}
%Now let $f \in E$.  Then,
%$$
%f = \lim_{N \to \infty} \sum_{n=1}^{\nu_N} \alpha_{n,N} t^{\lambda_n} \quad \mbox{ in } C((0,1)),
%$$
%for certain $\nu_N \in \N$ and $\alpha_{n,N} \in \C$.  Inequality \eqref{inequ} implies that the limit $\alpha_n = \lim_{N \to \infty} \alpha_{n,N}$ exists for each $n \in \N$ and that, for every $\varepsilon >0$ there are $C >0$ and a compact interval $J \subseteq (0,1)$ such that
%$$
%|\alpha_n| \leq C(1+\varepsilon)^{\lambda_n} \| f \|_J, \qquad n \in \N.
%$$
%Hence, by \eqref{}, we find that
%$$
%f(t) = } \sum_{n=1}^{\infty} \alpha_{n} t^{\lambda_n}, \qquad t \in (0,1).
%$$
%Another application of \eqref{} gives that this series is convergent 
\end{remark}
%\begin{corollary}
%Let $I \subseteq \R$ be a non-empty open interval. Then, $\mathcal{A}(I)$ is spaceable in $C(I)$.
%\end{corollary
We  now give a characterization of the closed subspaces of $C(I)$ consisting of real analytic functions.
\begin{theorem}\label{char}
Let $I \subseteq \R$ be a non-empty open interval. A Fr\'echet space is isomorphic to a closed subspace of $C(I)$ consisting of real analytic functions on $I$ if and only if  it is isomorphic to  a closed subspace of $\mathcal{H}(\D)$.
%\begin{itemize}
%\item[(a)] $E$ is isomorphic to a closed subspace of $\mathcal{H}(\D)$.
%\item[(b)] $E$ is isomorphic to a closed subspace of $\mathcal{A}(I)$.
%\item[(c)] $E$ is isomorphic to a closed subspace of $C(I)$ consisting of real analytic functions on $I$.
%\end{itemize}
\end{theorem}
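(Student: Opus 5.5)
One direction is immediate from Proposition \ref{compoper}. If $F$ is isomorphic to a closed subspace $G$ of $\mathcal{H}(\D)=\mathcal{O}(\D)$, take the real analytic $\varphi: I \to \D$ provided there. The composition operator $C_\varphi: \mathcal{O}(\D) \to C(I)$ is a topological embedding. Hence $C_\varphi(G)$ is isomorphic to $G$, and being complete it is closed in $C(I)$. Every $f\circ\varphi$ is real analytic on $I$, since it is a composition of a holomorphic function with a real analytic map into $\D$. So $C_\varphi(G)$ is a closed subspace of $C(I)$ consisting of real analytic functions, as required.

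For the converse, let $X \subseteq C(I)$ be a closed subspace with $X \subseteq \mathcal{A}(I)$. I would proceed in three steps.

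\emph{Step 1: $X$ is a Fr\'echet subspace of $\mathcal{A}(I)$.} The inclusion $X \to \mathcal{A}(I)$ has closed graph, because convergence in either topology implies pointwise convergence. The space $\mathcal{A}(I)$ is a projective limit of (LB)-spaces and in particular a webbed space. De Wilde's closed graph theorem therefore makes the inclusion continuous. Conversely $\mathcal{A}(I) \hookrightarrow C(I)$ is continuous. Hence the topology of $X$ induced by $\mathcal{A}(I)$ coincides with its Fr\'echet topology from $C(I)$, and $X$ is a Fr\'echet space continuously embedded in $\mathcal{A}(I)$.

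\emph{Step 2: localize holomorphically.} By Grothendieck's factorization theorem applied on each compact $J \Subset I$, there is a complex neighbourhood $U_J$ of $J$ to which every $f\in X$ extends holomorphically. Moreover, the restriction $X \to \mathcal{O}(U_J)$ is continuous. Taking an exhaustion $J_k$ of $I$, I would glue these neighbourhoods into a single complex neighbourhood $U$ of $I$ with $X \to \mathcal{O}(U)$ continuous. This is possible because analytic continuation is unique: an extension to $U_{J_k}\cup U_{J_{k+1}}$ is consistent, provided the $U_{J_k}$ are shrunk to discs-like neighbourhoods whose pairwise intersections are connected. Since every $f\in X$ is determined by its values on $I$ and the topologies from Step 1 agree, $X$ becomes a closed subspace of $\mathcal{O}(U)$. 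To see this, take the subspace of $\mathcal{O}(U)$ it defines: the map $\mathcal{O}(U)\to C(I)$ is continuous, and $X$ is the preimage of the closed set $X$ intersected with the image, which is isomorphic onto its image by the open mapping theorem.

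\emph{Step 3: reduce to the disk.} We can choose $U$ to be a simply connected domain, for instance a union of discs centred on $I$ shrinking towards the endpoints. By the Riemann mapping theorem $\mathcal{O}(U)\cong\mathcal{O}(\D)$, so $X$ is isomorphic to a closed subspace of $\mathcal{H}(\D)$.

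The main obstacle is Step 2: producing one common complex neighbourhood uniformly for all of $X$ and checking that the resulting embedding into $\mathcal{O}(U)$ is topological. This is exactly the content of the Doma\'nski–Langenbruch characterization \cite{DL} of Fr\'echet subspaces of $\mathcal{A}(\Omega)$. I would follow their proof there, using the description $\mathcal{A}(I)=\varprojlim_J \varinjlim_{U\supset J}\mathcal{H}^\infty(U)$ and the regularity of the inductive limits.
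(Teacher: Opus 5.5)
Your proposal is correct and matches the paper's proof. The forward direction goes through the composition operator of Proposition \ref{compoper}; for the converse, De Wilde's closed graph theorem shows that the topologies of $C(I)$ and $\mathcal{A}(I)$ agree on the subspace, and then \cite[Proposition 5.2]{DL} applies, which the paper simply cites while your Steps 2--3 (Grothendieck factorization on each $J \Subset I$, gluing over a union of discs centred on $I$, Riemann mapping) sketch it soundly, provided you bound the radii so that $U \neq \C$ when $I$ is unbounded.
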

\begin{proof}
Proposition \ref{compoper} implies that any Fr\'echet space that is isomorphic to  a closed subspace of $\mathcal{H}(\D)$ is also  isomorphic to a closed subspace of $C(I)$ consisting of real analytic functions on $I$. Conversely, let $E$ be a Fr\'echet space that is  isomorphic to a closed subspace of $C(I)$ consisting of real analytic functions on $I$. In \cite[Proposition 5.2]{DL} it is shown that every closed Fr\'echet subspace of $\mathcal{A}(I)$ is isomorphic to a closed subspace of $\mathcal{H}(\D)$. Hence,   it suffices to show that  $E$ is isomorphic to a closed subspace of $\mathcal{A}(I)$. There exists a topological embedding $T: E \to C(I)$ with $T(E) \subseteq \mathcal{A}(I)$. Note that $\mathcal{A}(I)$ is webbed and $E$ is ultrabornological.  De Wilde's  closed graph theorem \cite[Theorem 24.31]{M-V} therefore implies that the map  $T: E \to \mathcal{A}(I)$ is continuous. Since $\mathcal{A}(I)$ is continuously included into $C(I)$,  we find that  $T: E \to \mathcal{A}(I)$  is a topological embedding, which yields the result. 
\end{proof}

\begin{remark}
 Note that $\mathcal{H}(\D) \cong \Lambda_0(j)$ via Taylor coefficients; see \cite[Chapter 29]{M-V} for the definition of power series spaces. We refer to  \cite[Satz 3.2]{Vogt} for an internal characterization of the closed subspaces of   $\Lambda_0(j)$. 
\end{remark}

In the next result, we show that $C(I)$ does not contain infinite-dimensional complemented subspaces consisting of real analytic functions:
\begin{proposition}\label{prop-comp}
Let $I \subseteq \R$ be a non-empty open interval. Every complemented subspace of $C(I)$ consisting of real analytic functions on $I$ is finite-dimensional.
\end{proposition}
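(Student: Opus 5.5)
The plan is to combine three structural facts: every such subspace is nuclear, it carries a continuous norm, and complemented subspaces of $C(I)$ are quojections. A quojection with a continuous norm is a Banach space, and a nuclear Banach space is finite-dimensional.

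Let $E \subseteq C(I)$ be a complemented subspace consisting of real analytic functions, and let $P\colon C(I)\to E$ be a continuous projection. Being complemented, $E$ is closed, so $E$ is a Fréchet space satisfying the hypothesis of Theorem \ref{char}. Hence $E$ is isomorphic to a closed subspace of $\mathcal{H}(\D)$, which yields two properties.

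\begin{itemize}
\item[(a)] $E$ is nuclear, because $\mathcal{H}(\D)\cong\Lambda_0(j)$ is nuclear and nuclearity passes to subspaces.
\item[(b)] $E$ admits a continuous norm, since $\sup_{|z|\le 1/2}|f(z)|$ is a continuous norm on $\mathcal{H}(\D)$ by the identity theorem.
\end{itemize}

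Next we show that $C(I)$ is a quojection. Choose an exhausting increasing sequence of compact intervals $J_1\subseteq J_2\subseteq\cdots$ in $I$. Then $C(I)=\varprojlim C(J_n)$, and every linking restriction map $C(J_{n+1})\to C(J_n)$ is surjective by Tietze's extension theorem. So $C(I)$ is a projective limit of Banach spaces with surjective linking maps. We then invoke the known result, due to Bellenot and Dubinsky, that complemented subspaces of quojections are quojections. Thus $E=\varprojlim E_n$ with Banach spaces $E_n$ and surjective linking maps, where $E_n$ is the completion of $E/\ker q_n$ for an increasing fundamental system of seminorms $(q_n)$ on $E$. Concretely one may take $q_n(f)=\sup_{J_n}|Pf|$ restricted to $E$. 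This citation is the step I expect to carry the real weight; if it has to be avoided, I would instead verify directly that the quotient maps $E\to E_n$ are surjective by pushing the surjectivity of restrictions in $C(I)$ through $P$.

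Now we show that a quojection with a continuous norm is normable. Let $\|\cdot\|$ be a continuous norm on $E$, so $\|\cdot\|\le C q_k$ for some $k$. Then $q_k$ is itself a norm, so the canonical map $E\to E_k$ is injective. Surjectivity of the linking maps in the quojection structure (equivalently, the characterization of quojections via the kernels of these maps) gives that this map is an isomorphism onto the Banach space $E_k$. Hence $E$ is a Banach space. Finally, a nuclear Banach space is finite-dimensional, since its identity map would be nuclear and hence compact. This proves Proposition \ref{prop-comp}.
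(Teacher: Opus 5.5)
Your argument is correct, but it takes a different route from the paper's.

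\textbf{The paper's route.} The paper never uses Theorem \ref{char}, nuclearity, or quojections. By continuity of $P$ there are a compact interval $J$ and $C>0$ with $\sup_{J_0}|Pf|\le C\sup_J|f|$. The identity theorem then forces $P$ to vanish on $\ker\rho_J$, so $P=\tilde P\circ\rho_J$ with $\tilde P\colon C(J)\to C(I)$ continuous. The identity $\operatorname{id}_E=\tilde P\circ\rho_J|_E$ shows that restriction to the single interval $J$ is a topological embedding of $E$ into $C(J)$. Gurariy's theorem (closed subspaces of $C(J)$ consisting of differentiable functions are finite-dimensional) then finishes the proof.

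\textbf{How the two routes correspond.} The factorization is a hands-on proof of your step that a complemented subspace of a quojection with a continuous norm is Banach. There the continuous norm is $\sup_{J_0}|\cdot|$, which you could also have used directly instead of passing through $\mathcal{H}(\D)$. Gurariy's theorem plays the role of your step that a nuclear Banach space is finite-dimensional.

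\textbf{What each buys.} Your version gives a structural statement: no infinite-dimensional nuclear Fr\'echet space with a continuous norm is a quotient, in particular a complemented subspace, of a quojection. This is in the spirit of the Doma\'nski--Vogt result mentioned after the proposition. The cost is that nuclearity comes from Theorem \ref{char}, hence from the deep Doma\'nski--Langenbruch result and De Wilde's closed graph theorem. The paper's proof is elementary and self-contained.

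\textbf{An unjustified side claim.} You assert that one may concretely take $q_n=\sup_{J_n}|\cdot|$ on $E$ and get surjective linking maps. This is not justified a priori. For a general complemented subspace of $C(I)$, the set $E|_{J_n}$ need not be closed in $C(J_n)$; graphs of compact operators $\{f\in C(I): f|_{J}=0\}\to C(J)$ give examples. Your fallback of pushing surjectivity of restrictions through $P$ fails for the same reason: $P$ does not commute with restrictions.

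You do not need this claim. Stability of quojections under quotients is elementary: if $E=\varprojlim F_n$ with surjective canonical maps $\pi_n$, then $E/L$ has local Banach spaces $F_n/\overline{\pi_n(L)}$. Your normability step only requires some fundamental system of seminorms with surjective linking maps, so any abstract quojection representation of $E$ suffices.
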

\begin{proof}

Let $E$ be a complemented subspace of $C(I)$ consisting of real analytic functions on $I$. There exists a continuous linear projection $P: C(I) \to C(I)$ onto $E$. Fix a  compact interval $J_0 \subseteq I$ with non-empty interior. Choose a compact subinterval $J \subseteq I$ and $C >0$ such that
\begin{equation}
\label{propp}
\sup_{x \in J_0}|P(f)(x)| \leq C \sup_{x \in J} |f(x)|, \qquad \forall f \in C(I).
\end{equation}
%There are continuous linear maps $T: E \to C(I)$ and $S: C(I) \to E$ such that $T(E) \subseteq \mathcal{A}(I)$ and $S \circ T = \operatorname{id}_E$.  Fix a  compact interval $J_0 \subseteq I$ with non-empty interior. Choose a continuous seminorm $p$ on $E$ such that
%$$
%\| T(e) \|_{J_0} \leq p(e), \qquad x \in E.
%$$
%We claim that $p$ is a norm on $X$: Let $x \in X$ with $p(x) = 0$ and thus $\| T(x) \|_{J_0} = 0$. As $T(x) \in \mathcal{A}(I)$, the uniqueness property of real analytic functions imply that $T(x) = 0$. Since $T: X \to \C(I)$ is injective, we obtain that $x =0$.
%Next, pick $C >0$ and   a  compact interval $J \subseteq I$ such that
%\begin{equation}
%\label{propp}
%p(S(f)) \leq C\| f \|_{J}  , \qquad f \in C(I).
%\end{equation}
We write $\rho_J: C(I) \to C(J), \, f \mapsto f_{\mid J}$, for the restriction map. We claim that $\rho_{J \mid E}: E \to C(J)$ 
is a topological embedding. As $\rho_{J}(E) \subseteq \mathcal{A}(J) \subseteq C^1(J)$, the result would  follow from the fact  that the set of differentiable functions on $J$ is not spaceable in $C(J)$ \cite{Gurariy1966}.  We now show the claim. %In fact, we will prove the stronger statement that $\tilde{T}: E \to C(J)$ admits continuous linear left inverse. 
Since $P(C(I)) = E \subseteq \mathcal{A}(I)$, \eqref{propp} and the uniqueness property of real analytic functions imply that  $P(f) = 0$ for all $f \in C(I)$ with $f_{\mid J} = 0$. As the restriction $\rho_J: C(I) \to C(J)$ is a quotient map  and $P = 0$ on $\ker \rho_J$, there exists a continuous linear map $\tilde{P}: C(J) \to C(I)$ such that $\tilde{P} \circ \rho_J =  P$. Then, $\operatorname{id}_{E} = P_{\mid E} = \tilde{P} \circ \rho_{J \mid E}$, which implies the claim.
\end{proof} 
\begin{remark}
Proposition \ref{prop-comp} is related to the much deeper result \cite[Theorem 3.7]{DV} of Doma\'nski and Vogt that every complemented Fr\'echet subspace of $\mathcal{A}(I)$ is finite-dimensional. 
% In fact, there is an alternative proof of Proposition \ref{prop-comp} similar to the one  of \cite[Theorem 3.7]{DV} using the structure theory of Fr\'echet spaces \cite{M-V}. Let $E$ be a complemented subspace of $C(I)$ consisting of real analytic functions on $I$.  Then, $E$ is isomorphic to a closed subspace of $\mathcal{A}(I)$ (see the proof of Theorem \ref{char}). Hence, by  \cite[Theorem 3.7]{DV} , $E$ satisfies $(\underline{DN})$. Since $C(I)$ satisfies $(\overline{\overline{\Omega}})$ and this property is inherited by complemented subspaces,  $E$ also has this property. As $E$ satisfies both $(\underline{DN})$ and $(\overline{\overline{\Omega}})$, $E$ must be finite-dimensional (see the proof of \cite[Theorem 3.7]{DV}). %Our original proof of Proposition \ref{prop-comp}  is more elementary and, in our opinion, more insightful. 
\end{remark}

\begin{remark}
In contrast to Proposition \ref{prop-comp}, there do exist infinite-dimensional complemented subspaces of $C(I)$ consisting of smooth functions. We may suppose that $I = \R$. Pick $\varphi_n \in C^\infty(\R)$, $n \in \N$, with $\varphi_n(n) = 1$ and $\operatorname{supp} \varphi_n \subseteq [n- \frac{1}{2}, n + \frac{1}{2}]$. Define 
$$
E = \left \{ \sum_{n =0}^\infty \alpha_n \varphi_n \mid \alpha_n \in \C \mbox{ for all } n \in \N \right \}.
$$
Then, $E \subseteq C^\infty(\R)$ and $E$ is infinite-dimensional. The continuous linear map
$$
P: C(\R) \to C(\R), \, f \mapsto  \sum_{n =0}^\infty f(n) \varphi_n,
$$
is a projection onto $E$.

 This extends  \cite[Theorem 4.4]{BG}, in which it is shown that $C^\infty(I)$ is spaceable in $C(I)$. In addition, it gives an alternative proof of this  result.
\end{remark}

\end{document}